
\documentclass{daj}

\dajAUTHORdetails{%
  title = {Random Volumes in $d$-dimensional Polytopes}, 
  author = {Alan Frieze, Wesley Pegden, and Tomasz Tkocz},
  plaintextauthor = {Alan Frieze, Wesley Pegden, Tomasz Tkocz},
    %
    %
  plaintexttitle = {Random Volumes in d-dimensional Polytopes}, 
    %
    %
    %
   %
  keywords = {random polytope, high dimensional convex body, simplex, simplicial polytope, volume threshold},
}   

\dajEDITORdetails{%
   year={2020},
   number={15},
   received={28 February 2020},   
   published={22 September 2020},  
   doi={10.19086/da.17109},       
}   

\usepackage{diagbox}
\usepackage{mathtools}
\usepackage{bbm}
\usepackage{latexsym}
\usepackage{epsfig}
\usepackage{amsmath,amsthm,amssymb,enumerate,hyperref,bm}
\usepackage[active]{srcltx}
\usepackage{color}

\def\cI{{\cal I}}

\def\a{\alpha}   
\def\e{\varepsilon}    \def\g{\gamma}
  
 \def\th{\theta}    \def\l{\lambda}
 \def\m{\mu}  
\def\r{\rho}

\def\cD{{\cal D}}
\def\cP{{\cal P}}

\def\cC{{\cal C}}
\def\cB{{\cal B}}

\def\cM{{\cal M}}

\newtheorem{theorem}{Theorem}[section]

\newtheorem{lemma}[theorem]{Lemma}

\theoremstyle{definition}
\newtheorem{Remark}[theorem]{Remark}
\newtheorem{definition}{Definition}

\def\cA{{\mathcal A}}
\def\cB{{\mathcal B}}
\def\cC{{\mathcal C}}
\def\cD{{\mathcal D}}
\def\cR{{\mathcal R}}

\newcommand{\Vol}{\mathrm{Vol}}


\newcommand{\brac}[1]{\left(#1\right)}

\newcommand{\bfrac}[2]{\left(\frac{#1}{#2}\right)}

\def\cE{{\cal E}}

\newcommand{\set}[1]{\left\{#1\right\}}

\def\E{\mathbb{E}}

\def\Pr{\mathbb{P}}

\newcommand{\ignore}[1]{}

\newcommand{\beq}[2]{\begin{equation}\label{#1}#2\end{equation}}

\def\cI{{\cal I}}

\newcommand{\mults}[1]{\begin{multline*}#1\end{multline*}}

\usepackage{tikz}
\usetikzlibrary{arrows}
\usetikzlibrary{decorations}
\usetikzlibrary{shapes.misc}

\def\bl{{\bm \lambda}}
\def\bx{{\bf x}}
\def\by{{\bf y}}
\def\bp{{\bf p}}
\def\bq{{\bf q}}

\def\hE{\widehat{E}}

\def\vol{\mathrm{vol}}
\def\be{{\boldsymbol{\e}}}

\newcommand{\1}{\textbf{1}}
\newcommand{\pp}[1]{\mathbb{P}\left( #1 \right)}
\newcommand{\scal}[2]{\left\langle #1, #2 \right\rangle}
\DeclareMathOperator{\conv}{conv}

\begin{document}

\begin{frontmatter}[classification=text]

\title{Random Volumes in $d$-dimensional Polytopes} 

\author[af]{Alan Frieze\thanks{Supported in part by NSF grant DMS1952285}}
\author[wp]{Wesley Pegden\thanks{Supported in part by NSF grant DMS1363136}}
\author[t]{Tomasz Tkocz\thanks{Supported in part by NSF grant DMS1955175}}

\begin{abstract}
Suppose we choose $N$ points uniformly randomly from a convex body in $d$ dimensions.  How large must $N$ be, asymptotically with respect to $d$, so that the convex hull of the points is nearly as large as the convex body itself?  It was shown by Dyer-F\"uredi-McDiarmid that exponentially many samples suffice when the convex body is the hypercube, and by Pivovarov that the Euclidean ball demands roughly $d^{d/2}$ samples.  We show that when the convex body is the simplex, exponentially many samples suffice; this then implies the same result for any convex simplicial polytope with at most exponentially many faces.
\end{abstract}
\end{frontmatter}


\section{Introduction}
Suppose that points $\bq_1,\bq_2,\dots$ are sampled uniformly and independently from a convex body $X\subseteq \Re^d$. We are interested in the asymptotics of the random variable $V_{X,N}$ given by the volume of the convex hull of $\bq_1,\dots,\bq_N$.  In particular, we would like to know how large $N$ has to be to ensure that w.h.p.\footnote{with high probability: probability approaching 1 as $d\to \infty$} the volume of the convex hull of $\bq_1,\dots,\bq_N$ is a significant fraction of the volume of $X$.

This problem is well understood when $X$ is a product space (i.e., a hypercube) or a Euclidean ball.  In the case where $X$ is the hypercube $[0,1]^d$, the coordinates of the $\bq_i$ are independent uniform random variables in $[0,1]$, and Dyer, F\"uredi, and McDiarmid \cite{DFM} proved the following theorem.
\begin{theorem}[Dyer, F\"uredi, McDiarmid, 1992]\label{t.hypercube}
  If $X$ is the hypercube $[0,1]^d$, $\l=e^{\int_{0}^\infty\brac{\frac 1 u - \frac 1 {e^u-1}}^2du}\approx 2.14$, and $\e>0$, then as $d\to \infty$ we have that
  \[
  \E V_{X,N}\to\begin{cases}0& \text{if }N=N(d)\leq \brac{\l-\e}^d,\\
  1& \text{if }N=N(d)> \brac{\l+\e}^d.
  \end{cases}
  \]
\end{theorem}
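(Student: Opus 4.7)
The plan is to combine two standard reductions with a large-deviation computation that pins down the specific constant $\lambda$.

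By Fubini, $\E V_{X,N} = \int_{[0,1]^d} \Pr\bigl(x \in \conv(\bq_1,\ldots,\bq_N)\bigr)\,dx$, so the question becomes for which $N = N(d)$ a typical $x \in [0,1]^d$ lies in the random hull. Treating $x$ as an extra uniform sample and using the exchangeability of $N+1$ iid uniforms yields the equivalent form
\[
1 - \E V_{X,N} = \frac{\E[V_{N+1}]}{N+1},
\]
where $V_M$ denotes the number of vertices of the convex hull of $M$ uniform samples. So the theorem amounts to showing $\E[V_M] = (1-o(1))M$ when $M \le (\lambda-\e)^d$ and $\E[V_M] = o(M)$ when $M \ge (\lambda+\e)^d$.

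The second step is the separating-hyperplane characterization: a point $x$ fails to lie in $\conv(\bq_1,\ldots,\bq_N)$ exactly when some unit direction $\theta$ satisfies $\langle \theta, x\rangle > \max_i \langle \theta, \bq_i\rangle$. For fixed $\theta$, the variable $\langle \theta, \bq\rangle = \sum_i \theta_i q_i$ is a sum of $d$ independent rescaled uniforms whose Laplace transform $\prod_i (e^{t\theta_i} - 1)/(t\theta_i)$ is explicit, so its tail is controlled by exponential tilting. The hypercube's reflection symmetry $x \mapsto (1,\ldots,1) - x$ lets us restrict to $x \in [0,\tfrac12]^d$ and $\theta$ in the positive orthant. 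The upper bound (most $x$ are covered for large $N$) will follow from a net-and-concentration argument over $\theta$, while the lower bound (most $x$ lie outside for small $N$) requires producing an explicit witnessing $\theta$ for each $x$.

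Finally, to locate $\lambda$, one sets up the two-stage optimization: for each $x$ pick the $\theta$ minimizing the one-sample survival probability, then integrate over $x$. A Cram\'er-transform substitution $\theta_i = g(tx_i)$ at saddle parameter $t$ causes the exponent to factorize across coordinates, and as $d \to \infty$ the logarithm of the critical density reduces to $\int_0^\infty \bigl(\tfrac1u - \tfrac1{e^u-1}\bigr)^2\,du = \log \lambda$; the integrand here records the squared gap between the mean of the exponentially tilted $\mathrm{Uniform}[0,1]$ and the upper endpoint of its support, which is why it appears as the per-coordinate cost of carving out a separating half-space. The main obstacle is matching the upper and lower bounds so that they pin down precisely this constant rather than only bounding it from one side; this requires showing that the saddle-point $\theta$ really is the extremal direction, and that the net argument is fine enough in $\theta$-space to avoid losing sub-exponential factors that would spoil the identification of $\lambda$.
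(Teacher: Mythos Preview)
The paper does not prove Theorem~\ref{t.hypercube}; it is quoted as background from Dyer--F\"uredi--McDiarmid \cite{DFM}, so there is no ``paper's own proof'' to compare your proposal against. What the present paper does import from \cite{DFM} is Lemma~\ref{lm:central}, which packages the half-space function $\xi(x)$ and the two inequalities \eqref{eq:upp-bd}--\eqref{eq:low-bd}; the authors then use only \eqref{eq:upp-bd} together with the Legendre-transform bound \eqref{eq:q-subset-Bt} to prove their own lower bound for the simplex (Theorem~\ref{t.lower}).

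As for your sketch itself: the Fubini identity and the separating-half-space reduction are correct and are exactly the ingredients behind Lemma~\ref{lm:central}. However, your detour through Efron's identity $1-\E V_{X,N}=\E[V_{N+1}]/(N+1)$ is not the route \cite{DFM} takes, and it does not obviously help: you would still need sharp two-sided asymptotics for the expected vertex count, which is no easier than the original question. The actual argument in \cite{DFM} stays with $\xi(x)$ directly, proves that $\xi(x)\asymp e^{-\Lambda^\star(x)}$ up to subexponential factors (the upper bound is the Chernoff inequality as in \eqref{eq:q-subset-Bt}; the matching lower bound is a Bahadur--Rao type local limit estimate, not a net argument over $\theta$), and then shows that the level set $\{\Lambda^\star<\alpha d\}$ has volume $o(1)$ or $1-o(1)$ according as $\alpha$ is below or above $\log\lambda$. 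Your final paragraph gestures at the right integral, but the claim that a ``net-and-concentration argument over $\theta$'' delivers the upper bound is where the real work lies and is not the mechanism used: the sharp constant comes from controlling the volume of the Cram\'er level set, with the per-coordinate rate function $\int_0^1(\frac{1}{u}-\frac{1}{e^u-1})^2\,du$ arising from the Legendre transform of $\log\frac{e^t-1}{t}$ evaluated at the uniform mean.
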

In particular, an exponential number of sample points suffice to capture the volume of the hypercube with the convex hull of the sample (and they even determine the correct base of the exponent).  This was generalized in 2009 by Gatzouras and Giannopoulos in \cite{GGindep} to the case of random points with i.i.d.~coordinates which instead of being uniform are drawn from any even, compactly supported distribution that satisfies certain mild conditions.

On the other hand, if $X$ is the Euclidean ball, Pivovarov proved in \cite{Piv} that the threshold is super-exponential.

\begin{theorem}[Pivovarov, 2007]\label{t.ball}
  If $X$ is the unit Eulidean ball in $\Re^d$, $X = \{x \in \Re^d, \ \sum_{i=1}^d x_i^2 \leq 1\}$, and $\e>0$, then as $d\to \infty$ we have that
  \[
  \frac{\E V_{X,N}}{\Vol(X)}  \to\begin{cases}0& \text{if }N=N(d)\leq d^{\frac{d}{2}(1-\e)},\\
  1& \text{if }N=N(d)> d^{\frac{d}{2}(1+\e)}.
  \end{cases}
  \]
\end{theorem}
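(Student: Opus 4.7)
The whole argument reduces to the one-dimensional cap probability
\[
p(r) := \Pr\bigl(\langle \bq, \theta\rangle \ge r\bigr), \qquad \theta\in S^{d-1},\ \bq \text{ uniform in } B_d,
\]
which by rotational invariance does not depend on $\theta$. Since the marginal density of $\langle \bq, \theta\rangle$ is proportional to $(1-t^2)^{(d-1)/2}$, a short computation with Stirling yields, for $r\to 1$,
\[
p(r) \sim \sqrt{\frac{d}{2\pi}}\cdot \frac{\bigl(2(1-r)\bigr)^{(d+1)/2}}{(d+1)/2}.
\]
This calibrates the $d^{d/2}$ threshold: when $1-r\asymp 1/d$, $\log p(r) = -(d/2)\log d + O(d)$, so $p(r)\asymp N^{-1}$ precisely when $\log N \asymp (d/2)\log d$.

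\textbf{Lower threshold.} Suppose $N \le d^{d(1-\varepsilon)/2}$. I would bound $\Pr(\bx\in K_N)$ for $K_N := \conv(\bq_1,\dots,\bq_N)$ via the following geometric observation: if $\bx \ne 0$ lies in $K_N$ and $r = \|\bx\|$, then writing $\bx = \sum \lambda_i \bq_i$ as a convex combination forces $\max_i \langle \bq_i, \bx/r\rangle \ge r$. Hence $\Pr(\bx \in K_N) \le 1-(1-p(r))^N \le Np(r)$, and by polar integration
\[
\frac{\E V_{X,N}}{\Vol(B_d)} \le \int_0^1 d\,r^{d-1}\min\bigl(1, Np(r)\bigr)\,dr.
\]
Split the integral at $r^*$ where $Np(r^*) = 1$; the cap asymptotic gives $1 - r^* \asymp d^{-(1-\varepsilon)}$. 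The piece over $[0,r^*]$ is at most $(r^*)^d \le \exp(-\Theta(d^\varepsilon)) \to 0$, and substituting the cap asymptotic into the tail $\int_{r^*}^1 d\,r^{d-1}Np(r)\,dr$ gives a bound polynomial in $1/d$, which also tends to $0$.

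\textbf{Upper threshold.} Suppose $N \ge d^{d(1+\varepsilon)/2}$. I would show that with probability tending to $1$, $K_N \supseteq (1-\delta)B_d$ for $\delta = d^{-(1+\varepsilon/2)}$; since $(1-\delta)^d \ge 1 - d\delta = 1 - d^{-\varepsilon/2}\to 1$, this gives $\E V_{X,N}/\Vol(B_d)\to 1$. The inclusion is equivalent to the support function bound $\max_i\langle \bq_i, \theta\rangle \ge 1-\delta$ for every $\theta\in S^{d-1}$. Take an $\eta$-net $\mathcal{N}\subset S^{d-1}$ with $\eta=\delta/2$ and $|\mathcal{N}|\le(C/\eta)^d$; the standard net-to-sphere interpolation reduces to the same bound with $1-\delta/2$ on the net, and a union bound gives failure probability at most
\[
|\mathcal{N}|\bigl(1 - p(\delta/2)\bigr)^N \le \exp\bigl(d\log(2C/\delta) - Np(\delta/2)\bigr).
\]
Plugging in the cap asymptotic, $Np(\delta/2)$ exceeds $d\log(1/\delta)$ by a super-polynomial factor of order $d^{\Theta(d\varepsilon)}$, so the failure probability vanishes.

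\textbf{Main obstacle.} The delicate piece is the cap asymptotic at the scale $1-r\asymp 1/d$: Stirling-level care is required to track the polynomial-in-$d$ prefactors since the threshold exponent in the statement is sharp to within the $\varepsilon$-window. Once the one-dimensional estimate is in hand, the two thresholds fall out of the same pair of ingredients---the $Np(r)$ upper bound on $\Pr(\bx\in K_N)$ and the dual net lower bound on $K_N$---and the super-polynomial margins comfortably absorb both the $(r^*)^d$ contribution and the $(C/\eta)^d$ net overhead. Unlike for the hypercube or the simplex, rotational symmetry essentially reduces everything to the single variable $r$, so no combinatorial obstacle arises.
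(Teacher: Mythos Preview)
The paper does not give a proof of this statement: Theorem~\ref{t.ball} is quoted in the introduction as background, attributed to Pivovarov~\cite{Piv}, and the paper's own work concerns the simplex (Theorems~\ref{t.coveromega} and~\ref{t.lower}). So there is no ``paper's proof'' to compare against here.

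That said, your sketch is essentially the standard argument for the ball and is sound. Rotational invariance reduces both directions to the one-dimensional cap probability $p(r)$; your inequality $\Pr(\bx\in K_N)\le \min(1,Np(\|\bx\|))$ is exactly the right upper bound (it follows from the support-function observation you state), and the $\eta$-net argument for the inclusion $(1-\delta)B_d\subseteq K_N$ is the right mechanism on the other side. Two small corrections: in the upper-threshold display you write $p(\delta/2)$ where you mean $p(1-\delta/2)$; and in the cap asymptotic the numerator coming from $\int_r^1(1-t^2)^{(d-1)/2}\,dt\le \int_r^1(2(1-t))^{(d-1)/2}\,dt$ carries a factor $2^{(d-1)/2}$ rather than $2^{(d+1)/2}$, though this constant is irrelevant at the $d^{\pm\e d/2}$ scale. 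It is also worth noting explicitly that the inequality $(1-t^2)^{(d-1)/2}\le(2(1-t))^{(d-1)/2}$ makes your asymptotic a genuine \emph{upper} bound for $p(r)$ uniformly in $r\in[0,1]$, which is what the tail integral over $[r^*,1]$ needs when $\e$ is close to~$1$ and $r^*$ is not particularly close to~$1$.

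If you want to see how the paper's own machinery would handle the lower threshold, note that Lemma~\ref{lm:central} (the Dyer--F\"uredi--McDiarmid device via the half-space function $\xi$) specialises in the rotationally invariant case to exactly your bound $\xi(\bx)=p(\|\bx\|)$, and \eqref{eq:upp-bd} with $A=\{\xi>1/N\}$ reproduces your split at~$r^*$.
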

For results concerning a more general rotationally symmetric model of the so-called $\beta$-polytopes (also exhibiting super-exponential thresholds), see the recent papers \cite{B1,B2}. For general bounds on $N$ concerning arbitrary log-concave and $\kappa$-concave distributions see \cite{kappa}.

We analyze the case where $X$ is a convex simplicial polytope: that is, a polytope whose facets are all simplices. In particular, we prove the following result.
  \begin{theorem}\label{t.polytope}
    Let $X\subseteq \Re^d$ be a convex simplicial polytope with $m$ facets, let $\bq_1,\bq_2,\dots$ be a sequence of points chosen independently and uniformly from $X$, and let $Q_j=Q_{j,d}\subseteq \Omega$ be the convex hull of $\{\bq_1,\dots,\bq_j\}$. There are positive universal constants $c_0, C_0$ such that if $d$ is sufficiently large and $N>C_0^dm$, then $\Vol(Q_{N}) \geq (1-e^{-c_0\sqrt{d}} )\Vol(X)$.
  \end{theorem}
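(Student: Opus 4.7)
The plan is to reduce Theorem~\ref{t.polytope} to a single-simplex volume concentration result by triangulating $X$. Fix any interior point $p \in X$ (e.g.\ its centroid) and, for each facet $F_i$ ($1 \le i \le m$), set $T_i := \conv(\{p\} \cup F_i)$. Since $X$ is simplicial, each $F_i$ is a $(d-1)$-simplex, so each $T_i$ is a $d$-simplex; the $T_i$ have pairwise disjoint interiors, $\bigcup_i T_i = X$, and $\sum_i \Vol(T_i) = \Vol(X)$.

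The heart of the argument is a single-simplex lemma of the form: there exist universal constants $c_1, C_1 > 0$ such that for every $d$-simplex $T$ and every $K \ge C_1^d$, if $\bp_1,\dots,\bp_K$ are i.i.d.\ uniform in $T$ then, with probability at least $1-e^{-d}$,
\[
\Vol\bigl(\conv(\bp_1,\dots,\bp_K)\bigr) \ge \bigl(1 - e^{-c_1\sqrt d}\bigr)\Vol(T).
\]
By affine invariance of volume ratios one may take $T$ to be the standard simplex $\Delta_d = \{x\in\Re^d_{\ge0} : \sum_j x_j \le 1\}$, which admits a convenient representation via independent exponentials. The natural route is to bound the expected missed volume via
\[
\Vol(\Delta_d) - \E\Vol\bigl(\conv(\bp_1,\dots,\bp_K)\bigr) = \int_{\Delta_d} \Pr\bigl[x \notin \conv(\bp_1,\dots,\bp_K)\bigr]\,dx,
\]
estimating $\Pr[x \notin \conv]$ by optimizing over halfspaces through $x$ (the point is missed iff some such halfspace contains all samples), and splitting $\Delta_d$ into (i) a bulk, on which this probability is at most $e^{-d}$ once $K \ge C_1^d$, and (ii) a boundary shell (points close, in an affine-invariant sense, to some facet) whose own volume is already at most $e^{-c_1\sqrt d}\Vol(\Delta_d)$. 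A concentration step would then upgrade the expectation bound to the above high-probability bound.

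Granted the lemma, the combination runs as follows. Set $\eta = e^{-c_1\sqrt d}$. Call $T_i$ \emph{large} if $\Vol(T_i)/\Vol(X) \ge \eta/m$, otherwise \emph{small}; the total volume of small simplices is at most $\eta\Vol(X)$. Choose $C_0$ large enough that $C_0^d\, \eta \ge 2 C_1^d$ (possible because $\eta$ is only sub-exponentially small, so the $1/\eta$ factor can be absorbed into a larger exponential base). If $N \ge C_0^d m$, then each large $T_i$ has expected sample count $N\Vol(T_i)/\Vol(X) \ge N\eta/m \ge 2 C_1^d$, so Chernoff plus a union bound over the $m$ simplices yield $N_i \ge C_1^d$ in every large $T_i$ with overwhelming probability. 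Conditional on the multinomial partition of the samples into the $T_i$, the samples inside each $T_i$ are i.i.d.\ uniform in $T_i$, so the single-simplex lemma applies independently in each large $T_i$. Since each $T_i$ is convex, $\conv(\bq_j : \bq_j \in T_i) \subseteq T_i \cap Q_N$, and summing over large $i$ yields
\[
\Vol(Q_N) \ge (1-\eta)\sum_{\text{large } i} \Vol(T_i) \ge (1-2\eta)\Vol(X),
\]
as required after absorbing a factor of $2$ into $c_0$.

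The hard part is unambiguously the single-simplex lemma: the exponential-in-$d$ sample size $K \asymp C_1^d$ must be matched with the stretched-exponential rate $e^{-c_1\sqrt d}$, and one cannot afford to lose even a factor of $(\log d)^d$ in either quantity. Balancing the tail estimates on $\Pr[x\notin\conv]$ in the bulk against the thickness of the boundary shell to achieve exactly the $\sqrt d$ exponent is the delicate point, and is presumably where the bulk of the paper's technical work lies.
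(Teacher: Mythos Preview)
Your triangulation-based reduction is exactly the route the paper takes, stated there in a single sentence: since a simplicial polytope with $m$ facets decomposes into at most $m$ simplices and volume ratios are affine-invariant, Theorem~\ref{t.polytope} follows from the single-simplex statement (Theorem~\ref{t.coveromega}). One small wrinkle in your combination step: union-bounding the per-simplex failure probability $e^{-d}$ over $m$ simplices gives only $me^{-d}$, which need not be small since $m$ is unrestricted. Working in expectation throughout (summing $\E[\Vol(T_i)-\Vol(T_i\cap Q_N)]$ over $i$) sidesteps this cleanly, and indeed Theorem~\ref{t.coveromega} is itself stated in expectation.

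Where your sketch genuinely diverges from the paper is in the single-simplex lemma itself. The halfspace / Legendre-transform route you outline is the Dyer--F\"uredi--McDiarmid method, and the paper does use it---but only for the \emph{lower} bound (Theorem~\ref{t.lower}), showing that $N\ge e^{(\gamma-\varepsilon)d}$ is necessary. For the upper bound the authors explicitly abandon that route (they conjecture it would give the sharp constant $e^{\gamma}$ but do not carry it through), remarking that ``it is perhaps surprising that we prove the theorem by actually identifying $S_x$, rather than, say, considering whether $\bx$ is separated from the convex hull by a hyperplane.'' Their method is constructive: for each vertex $e_i$ of $\Omega$ they define the cap $C_i(\alpha)=\{\bx\in\Omega:x_i\ge 1-\alpha\}$ and, for a fixed typical $\bx$, wait for the sample sequence to produce one point $\bp_i$ in each cap that is additionally small on the coordinates where $\bx$ itself is small (Lemma~\ref{l.goodpi}). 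The matrix with rows $\bp_i$ is then diagonally dominant, and a Gershgorin / Neumann-series argument (Lemma~\ref{l.contain}) shows that with probability at least $c^d$ the affine coordinates of $\bx$ in the $\bp_i$ are all nonnegative, i.e.\ $\bx\in\conv\{\bp_1,\dots,\bp_d\}$. Iterating over $c^{-d}$ independent candidate simplices pushes the coverage probability of $\bx$ to $1-e^{-d}$, and integrating over the typical set $\Omega(\be,\g)$---whose complement already has volume at most $e^{-c_0\sqrt d}\Vol(\Omega)$ by Lemma~\ref{l.typicalvolume}---yields the expectation bound. So your intuition that the $\sqrt d$ rate comes from a boundary shell is correct; what you did not anticipate is that the bulk argument is primal (build an explicit containing simplex) rather than dual (rule out separating hyperplanes).
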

  
  Since any convex simplicial polytope with $m$ faces can be partitioned into at most $m$ simplices, which are all affine equivalent, it suffices to prove Theorem \ref{t.polytope} in the case where $X$ is a simplex.  In particular, we let $\Omega_d$ denote the standard embedding of the $(d-1)$-dimensional simplex in $d$-dimensional space:
  \[
  \Omega_d=\set{\bx\geq {\bf 0}:x_1+x_2+\cdots+x_d=1}.
  \]
  The heart of our results is thus the following statement.
  \begin{theorem}\label{t.coveromega}
    Let $\bq_1,\bq_2,\dots$ be a sequence of points chosen independently and uniformly from $\Omega=\Omega_d$, and let $Q_j=Q_{j,d}\subseteq \Omega$ be the convex hull of $\{\bq_1,\dots,\bq_j\}$. There are positive constants $c_0, C_0$ such that if $d$ is sufficiently large and $N>C_0^d$, then $\E\Vol(Q_{N})\geq (1-e^{-c_0\sqrt{d}})\Vol(\Omega)$.
  \end{theorem}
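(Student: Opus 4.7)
The plan is to start from the identity
\[
\Vol(\Omega) - \E\Vol(Q_N) = \int_\Omega \Pr[\bx \notin Q_N]\,d\bx
\]
and split $\Omega$ into a well-balanced \emph{core} $\Omega^\circ$ and a \emph{peripheral} part $\Omega \setminus \Omega^\circ$. The periphery will be handled by volume concentration, while the core will be handled by a union bound over a net of halfspaces.

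First, using the classical representation $\bq \stackrel{d}{=} (E_1,\ldots,E_d)/\sum_k E_k$ with $E_i$ i.i.d.\ standard exponentials, I would define the core by a balance condition on the coordinates (or on suitable weighted partial sums $\sum_i v_i x_i$), carefully tuned so that $\Vol(\Omega\setminus\Omega^\circ)/\Vol(\Omega)\le e^{-c_0\sqrt d}$. This bound should follow from a concentration-of-measure estimate in the exponential representation, where the $\sqrt d$-scale fluctuations of the partial sums around their means set the natural error rate.

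Next, for $\bx \in \Omega^\circ$, I would bound $\Pr[\bx \notin Q_N]$ via a union bound over a finite net of halfspaces. A point $\bx$ lies outside $Q_N$ iff some closed halfspace $H \ni \bx$ contains no sample $\bq_j$. Introducing a $\delta$-net $\cN$ in the space of halfspaces meeting $\Omega$, of cardinality $|\cN| \le C_1^d$, one rounds any such $H$ to a nearby $H^\star \in \cN$. For each $H^\star$, the probability that it is empty of samples is at most $\exp(-p(H^\star) N)$, where $p(H) = \Vol(H\cap\Omega)/\Vol(\Omega)$. If every halfspace containing a core point satisfies $p(H) \ge e^{-c_1 d}$, then for $C_0>e^{c_1}$ the union bound gives $\Pr[\bx\notin Q_N]\le C_1^d\exp(-(C_0/e^{c_1})^d)$, super-exponentially small in $d$ and hence well within the required error.

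The geometric heart of the argument—and the step I expect to be the main obstacle—is the cap lower bound: every halfspace $H$ through a core point must cut off at least an $e^{-c_1 d}$ fraction of $\Omega$. Writing $H=\{y:\langle \bv,y\rangle \ge \alpha\}$ and letting $i^\star=\arg\max_i\langle \bv,e_i\rangle$, I would classify halfspaces by $i^\star$ and compare the general cap to a canonical vertex-pointing cap $\{y:y_{i^\star}\ge t\}$ of comparable volume, using the marginal density $(d-1)(1-t)^{d-2}$ of a single coordinate under the uniform measure on $\Omega$. Making this comparison uniform in $\bv$—so that generic directions do not produce caps exponentially thinner than their vertex-aligned counterparts—appears delicate and will likely require exploiting the simplex structure together with the exponential representation. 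This cap-volume input is the pivotal geometric ingredient; everything else is packaging via the union bound and the concentration split.
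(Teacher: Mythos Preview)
Your separating-hyperplane strategy is a genuinely different route from the paper's, and the paper explicitly flags this: it remarks that ``it is perhaps surprising that we prove the theorem by actually identifying $S_x$, rather than, say, considering whether $\bx$ is separated from the convex hull by a hyperplane.''  What the paper does instead is \emph{constructive}: for each $\bx$ in the core it looks among the $\bq_j$ for one point $\bp_i$ in each vertex-cap $C_i(\alpha)=\{y:y_i\ge 1-\alpha\}$, conditioned further so that the small coordinates of $\bp_i$ line up with the small coordinates of $\bx$; then it writes $\bx=\sum_i\lambda_i\bp_i$ and uses a Gershgorin/diagonal-dominance argument on the matrix with rows $\bp_i$ to show $\lambda_i\ge 0$ with probability at least $c^d$.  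The paper's core is accordingly defined through the \emph{order statistics} of $\bx$ (the $k$th smallest coordinate is not too small), precisely the data needed for that linear-algebra step.

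Your approach would plug directly into the Dyer--F\"uredi--McDiarmid lower bound $\E\Vol(Q_N)\ge\Vol(A)\bigl(1-2\binom{N}{d}(1-\inf_A\xi)^{N-d}\bigr)$, which the paper actually quotes; using that would let you drop the $\delta$-net altogether and avoid the rounding headaches (as stated, your net step is loose: rounding $H$ to $H^\star$ does not obviously preserve emptiness of samples without shrinking, and shrinking can destroy the cap-volume lower bound).  But the substantive gap is the one you identify yourself and then leave open: the uniform cap lower bound $\xi(\bx)\ge e^{-c_1 d}$ on a set of covolume $e^{-c_0\sqrt d}$.  Your plan to compare an arbitrary cap $\{\langle\bv,y\rangle\ge\alpha\}$ to a coordinate cap $\{y_{i^\star}\ge t\}$ via the one-dimensional marginal is not a proof; the minimizing direction at $\bx$ need not be close to any $e_i$, and what is needed is a Cram\'er-type \emph{lower} bound on $\Pr[\langle\bv,\bq\rangle\ge\langle\bv,\bx\rangle]$ that is uniform in $\bv$ and loses only an exponential factor, over a set that must itself have covolume $e^{-c\sqrt d}$.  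The paper's lower-bound section establishes the reverse inequality $\xi(\bx)\le e^{-\Lambda^\star(\bx)}$ together with $\Lambda^\star(\bq)/d\to\gamma$; reversing that to the extent you need would essentially be the sharp-threshold conjecture the authors leave open.  So as written, your proposal names the right bottleneck but does not supply the idea that resolves it, which is exactly why the paper took the constructive detour.
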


\begin{Remark}\label{rem:conv-a.s.}
By the Borel-Cantelli lemma, it follows that if we take a sequence of instances $\Omega_1,\Omega_2,\dots,$ then $\Vol(Q_{N,d})/\Vol(\Omega_d)\to 1$ as $d\to \infty$ with probability 1.
\end{Remark}

\begin{Remark}\label{rem:consts-final}
For clarity, we do not try to optimize any constants in our proofs. We get the theorem with $c_0 = \frac14$ and $C_0 = 300$.
\end{Remark} 

The following lower bound shows that an exponential dependence is necessary.
  \begin{theorem}\label{t.lower}
Under the assumptions of Theorem \ref{t.coveromega}, for every $\e > 0$, if $N < e^{(\gamma-\e)d}$, then we have $\frac{1}{\Vol(\Omega_d)}\E \Vol(Q_N) \to 0$ as $d\to\infty$, where $\gamma = 0.577\ldots$ is the Euler-Mascheroni constant.
  \end{theorem}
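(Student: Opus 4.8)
The plan is to reduce the statement to a simple tail bound for a product of coordinates, in which the Euler--Mascheroni constant enters only through the elementary identity $\E\log E=\Gamma'(1)=-\gamma$ for a standard exponential random variable $E$. Introducing an auxiliary point $\bq_0$ uniform on $\Omega_d$ and independent of $\bq_1,\dots,\bq_N$, Fubini's theorem gives
\[
\frac{\E\,\Vol(Q_N)}{\Vol(\Omega_d)}\;=\;\Pr\big(\bq_0\in\conv(\bq_1,\dots,\bq_N)\big),
\]
so it suffices to show the right-hand side tends to $0$. The heart of the argument is to exhibit, for (almost) every value of $\bq_0$, an explicit separating hyperplane. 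Write $\bq_0=(\lambda_1,\dots,\lambda_d)$ with all $\lambda_i>0$ (which holds a.s.) and take the linear functional $\ell(y)=\sum_{i=1}^d\big(1-\tfrac1{d\lambda_i}\big)y_i$, which on $\Omega_d$ equals $1-\tfrac1d\sum_i y_i/\lambda_i$; in particular $\ell(\bq_0)=0$. Hence $\bq_0\notin\conv(\bq_1,\dots,\bq_N)$ as soon as $\ell(\bq_j)<0$, i.e.\ $\sum_i(\bq_j)_i/\lambda_i>d$, for every $j$.

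Conditionally on $\bq_0$, I would bound the probability that a single sample fails this. Using the representation of a uniform point of $\Omega_d$ as $(F_1,\dots,F_d)/\sum_l F_l$ with the $F_l$ i.i.d.\ $\mathrm{Exp}(1)$, and Markov's inequality applied to $\exp\big(\sum_i(1-\tfrac1{d\lambda_i})F_i\big)$ (the exponent $1$ is admissible because $1-\tfrac1{d\lambda_i}<1$, and it is optimal because $\sum_i(d\lambda_i-1)=0$), one obtains
\[
\Pr\big(\ell(\bq_1)\ge 0\,\big|\,\bq_0\big)\;\le\;\prod_{i=1}^d\E\, e^{(1-1/(d\lambda_i))F_i}\;=\;\prod_{i=1}^d d\lambda_i\;=:\;W\;\le\;1,
\]
the last inequality by AM--GM. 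A union bound over $j=1,\dots,N$ then yields
\[
\Pr\big(\bq_0\in\conv(\bq_1,\dots,\bq_N)\big)\;\le\;\E\big[\min\{1,\,N W\}\big],\qquad W=d^d\prod_{i=1}^d(\bq_0)_i.
\]

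Finally I would show $\E[\min\{1,N W\}]\to 0$ whenever $N\le e^{(\gamma-\e)d}$. Writing $\bq_0=(E_1,\dots,E_d)/S$ with $E_i$ i.i.d.\ $\mathrm{Exp}(1)$ and $S=\sum_i E_i\sim\mathrm{Gamma}(d,1)$, we have $\log W=d\log d+\sum_i\log E_i-d\log S$. Since $\E\log E_i=-\gamma$ and $\E\log S=\psi(d)=\log d-O(1/d)$, this gives $\E\log W=-\gamma d+O(1)$; and $\mathrm{Var}(\log W)=O(d)$, using $\mathrm{Var}(\sum_i\log E_i)=d\,\psi'(1)=O(d)$, $\mathrm{Var}(d\log S)=d^2\psi'(d)=O(d)$, and Cauchy--Schwarz for the cross term (or, more robustly, a Bernstein-type bound for the i.i.d.\ sum together with concentration of $S$ around $d$ --- any rate $o(d)$ suffices). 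Hence $\log(NW)$ has mean at most $(\gamma-\e)d-\gamma d+O(1)=-\e d+O(1)$ and standard deviation $O(\sqrt d)=o(\e d)$, so Chebyshev gives $\Pr(NW>\eta)\to 0$ for every fixed $\eta\in(0,1)$; combined with $\E[\min\{1,NW\}]\le\Pr(NW>\eta)+\eta$ and the arbitrariness of $\eta$, this completes the proof.

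The main obstacle is identifying the correct separating functional: $\ell$ points ``away from $\bq_0$'' exactly in the coordinate directions where $\bq_0$ is small, and it is precisely this choice whose Chernoff bound collapses to the product $\prod_i d\lambda_i$, which is typically of order $e^{-\gamma d}$ because $\tfrac1d\sum_i\log(d\lambda_i)\to-\gamma$. Everything else is routine; the only other point needing attention is a quantitative concentration estimate for $\log W$ (equivalently for $\sum_i\log E_i$ and $\log S$), and there the slack $\e d$ in the hypothesis makes even crude bounds sufficient.
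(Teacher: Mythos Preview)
Your argument is correct, and at its core it is the same as the paper's: for each target point one chooses the Cram\'er/Legendre-optimal separating half-space, bounds via Chernoff the probability that a single uniform sample falls on the wrong side (this is where $e^{-\gamma d}$ appears), and finishes with a law-of-large-numbers/concentration step. In fact your functional $\ell(y)=\sum_i(1-\tfrac1{d\lambda_i})y_i$ is exactly the optimizer the paper finds when computing $\Lambda^\star$, and your quantity $-\log W=-\sum_i\log(d\lambda_i)$ equals the paper's $\sum_i\psi(dq_i)$ (since $\sum_i dq_i=d$).

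The execution, however, is genuinely more direct than the paper's in three places. First, the paper packages the half-space argument through the Dyer--F\"uredi--McDiarmid inequality \eqref{eq:upp-bd} applied to the level set $\{\xi>e^{-\alpha d}\}$, whereas you work directly with $\Pr(\bq_0\in Q_N)$ and a union bound; this saves the detour through $\xi$ and $\Lambda^\star$. Second, the paper bounds the moment generating function of the uniform law on $S_d$ by dominating the simplex integral by a full-space exponential integral (Lemma~\ref{lm:mom-gen}), incurring an extra factor $d$; you instead exploit the exact exponential representation of $\Omega_d$ and obtain the clean identity $\Pr(\ell(\bq_1)\ge0\mid\bq_0)\le\prod_i d\lambda_i$ with no slack. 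Third, for the concentration of $\frac1d\sum_i\log(dq_i)$ around $-\gamma$, the paper decouples from the normalizing sum $Z$ by splitting $\psi$ into monotone pieces $\psi_1,\psi_2$ and then invokes the weak law; your route---writing $\log W=d\log d+\sum_i\log E_i-d\log S$, computing mean $-\gamma d+O(1)$ and variance $O(d)$ (with Cauchy--Schwarz for the cross term), then Chebyshev---is shorter and entirely quantitative. The trade-off is that the paper's formulation sits inside the general DFM framework, which applies verbatim to other bodies, while your argument is tailored to the simplex via the exponential representation.
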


A similar lower bound with a worse constant follows from Theorem 1 in \cite{kappa}.  To prove Theorem \ref{t.lower}, we use the approach from \cite{DFM}. We conjecture that the value of the constant $e^{\gamma}$ is sharp. (The method from \cite{DFM} yields sharp results in the independent case as well as rotationally symmetric ones -- see \cite{B1, B2, DFM, Piv} -- where the dependence between components is mild, as in the case of a simplex.) For the upper bound, we follow a different strategy, which is summarized at the beginning of the next section.

The rest of the paper comprises two sections, which are devoted to the proofs of Theorems \ref{t.coveromega} and~\ref{t.lower}.

  \section{Proof of the upper bound: Theorem \ref{t.coveromega}}
  
We begin by sketching the structure of the whole proof.      For $i=1,\dots,d$ we define the \emph{$\alpha$-caps} $C_i(\alpha)$ of the simplex to be the sets
\begin{equation}\label{e.cap}
  C_i(\alpha):=\Omega\cap \set{\bx\mid x_i\geq 1-\alpha}.
\end{equation}
Note that they are disjoint as long as $\a < \frac12$, and the volume $\Vol(C_i(\alpha))$ of $C_i(\alpha)$ is precisely $\alpha^{d-1}\cdot \Vol(\Omega)$.  In particular, when examining the sequence $\{\bq_j\}$, we expect to see a point in $C_{i}(\alpha)$ 
every $(\tfrac 1 \alpha)^{d-1}$ steps.  And for $\alpha$ a constant, after exponentially many steps, we can collect points from each cap $C_i(\alpha)$.  A routine calculation shows that the expected measure of the convex hull of a random set of $d$ points with one from each $C_i(\alpha)$ is exponentially small compared with $\Omega$, though it is not a priori clear how much overlap to expect from multiple such random simplices.  The basic strategy of the proof is to define a large set $\Omega(\be,\g)\subseteq \Omega$, and then show that for any \emph{fixed} $\bx\in \Omega(\be,\g)$, the point $\bx$ is very likely to lie in the convex hull of some simplex with one point $\bp^\bx_{i}$ in each in cap $C_i(\alpha)$, where all the the points $\bp^\bx_1,\bp^\bx_1,\dots,\bp^\bx_d$ occur among the first $C_0^d$ terms of the sequence $\bq_1,\bq_2,\dots$.  We do this by showing (in Lemma \ref{l.goodpi}) that every exponentially many steps, one obtains not only a point $\bp^\bx_i$ which lies in the cap $C_i(\a)$, but one which is similar to $\bx$ with respect to its proximity to a lower dimensional face close to $\bx$ -- this provides points which give a good chance of containing $\bx$ in the convex hull reasonably quickly.  (The fact that the points $p_i^\bx$ are large in coordinate $i$ lets us view them as a diagonally dominant matrix, which we exploit to show that $\bx$ is likely to lie in their convex hull.)   Linearity of expectation will then show that the measure of the uncovered part of $\Omega(\be,\g)$ is very small, and Markov's inequality can then give a w.h.p statement as in the theorem.  In particular, although $\bx$ lying in the convex hull of $\set{\bq_1,\dots,\bq_N}$ is of course equivalent to $\bx$ lying in some simplex $S_x$ with vertices in $\set{\bq_1,\dots,\bq_N}$, it is perhaps surprising that we prove the theorem by actually identifying $S_x$, rather than, say, considering whether $\bx$ is separated from the convex hull by a hyperplane.

  \subsection{The exponential model}
A basic tool we use is the standard fact that the coordinate vector of a uniformly random point in the simplex $\Omega$ can be simply described using independent exponentials, as encapsulated in the first part of the following lemma.
\begin{lemma}\label{l.uniform}
  If we generate a random point $\bq\in \Omega$ by generating the coordinates $q_j$ as 
\begin{equation}\label{e.q}
q_{j}=\frac{E_{j}}{E_{1}+\dots+E_{d}},
\end{equation}
where the $E_i$'s are independent, mean 1 exponentials, then $\bq$ is uniform in $\Omega$.  Moreover, if we generate points $\bp_i$ $(i=1,\dots,d)$ by generating the coordinates as
\begin{align}\
  p_{i,j}&=\frac{\alpha E_{i,j}}{\sum_{k\neq i} E_{i,k}+\alpha E_{i,i}} \quad \mbox{for}\quad i\neq j\label {e.pdiff}\\
  p_{i,i}&=(1-\alpha)+\frac{\alpha^2 E_{i,i}}{\sum_{k\neq i} E_{i,k}+\alpha E_{i,i}}\label{e.psame},
\end{align}
where the $E_{i,j}$s are independent mean-1 exponentials, then each $\bp_i$ is uniform in the cap $C_i(\alpha)$.
\end{lemma}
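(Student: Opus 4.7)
The plan is to derive both parts from the classical Gamma--Dirichlet connection: a vector of i.i.d.\ unit exponentials, normalized by its sum, is uniform on the simplex.

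For part~(1), I would perform the change of variables $(E_1,\dots,E_d)\mapsto(q_1,\dots,q_{d-1},S)$ with $S=\sum_k E_k$ and $q_j=E_j/S$. The joint density of $(E_1,\dots,E_d)$ equals $e^{-S}$ on $\mathbb{R}^d_{\ge 0}$; the inverse map $E_j=q_j S$ (with $E_d=(1-q_1-\cdots-q_{d-1})S$) has Jacobian $S^{d-1}$, so the pushforward density on $\Omega\times(0,\infty)$ is $S^{d-1}e^{-S}$. Since this is constant in $(q_1,\dots,q_{d-1})$, integrating out $S$ leaves a constant density on $\Omega$, i.e.\ the uniform law.

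For part~(2), the key geometric observation is that $C_i(\alpha)$ is the image of $\Omega$ under the affine map $T_i(\bx):=(1-\alpha)e_i+\alpha\bx$, which fixes $e_i$ and sends each $e_j$ $(j\ne i)$ to $(1-\alpha)e_i+\alpha e_j$; the convex hull of these images is precisely the cap. Since $T_i$ is an affine bijection between the affine hulls of $\Omega$ and $C_i(\alpha)$ with constant Jacobian factor $\alpha^{d-1}$, it pushes the uniform law on $\Omega$ forward to the uniform law on $C_i(\alpha)$. Taking $\bq$ to be the uniform point on $\Omega$ produced from $(E_{i,k})_{k=1}^d$ via part~(1), the point $T_i(\bq)$ is therefore uniform on $C_i(\alpha)$; it remains to identify the stated coordinate formulas for $\bp_i$ with those of $T_i(\bq)$, which reduces to an algebraic manipulation of the denominator $\sum_{k\ne i}E_{i,k}+\alpha E_{i,i}$.

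The only step that requires care is the coordinate-by-coordinate matching of the lemma's formulas with $T_i(\bq)$; beyond this short algebraic check, the argument is entirely standard, and I do not anticipate any serious obstacle.
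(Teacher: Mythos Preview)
Your treatment of part~(1) is fine and essentially equivalent to the paper's (the paper cites the gaps/exponentials fact; you compute the Jacobian directly).

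For part~(2), however, there is a genuine gap. Your affine-map observation is correct: $T_i(\bx)=(1-\alpha)e_i+\alpha\bx$ is an affine bijection $\Omega\to C_i(\alpha)$, so $T_i(\bq)$ is uniform on $C_i(\alpha)$ whenever $\bq$ is uniform on $\Omega$. But the final ``algebraic manipulation'' you anticipate does not go through. If $\bq$ is the part-(1) point built from $(E_{i,k})_{k=1}^d$, then for $j\ne i$
\[
(T_i\bq)_j \;=\; \frac{\alpha E_{i,j}}{\sum_{k=1}^d E_{i,k}},
\]
whereas the lemma asserts
\[
p_{i,j} \;=\; \frac{\alpha E_{i,j}}{\sum_{k\ne i} E_{i,k}+\alpha E_{i,i}}.
\]
These differ exactly by $E_{i,i}$ versus $\alpha E_{i,i}$ in the denominator, and no pointwise identity repairs this. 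Equivalently, $\bp_i=T_i(\tilde\bq)$ where $\tilde\bq$ is the normalization of $(E_{i,1},\dots,E_{i,i-1},\alpha E_{i,i},E_{i,i+1},\dots,E_{i,d})$; since $\alpha E_{i,i}$ has mean $\alpha\ne 1$, part~(1) does \emph{not} tell you $\tilde\bq$ is uniform on $\Omega$, and in fact it is not. So your argument proves that a \emph{different} (simpler) exponential representation is uniform on $C_i(\alpha)$, but not the one stated in the lemma --- and it is the stated one that is used downstream (Lemmas~\ref{l.goodpi} and~\ref{l.contain} rely on the precise denominator $\sum_{k\ne i}E_{i,k}+\alpha E_{i,i}$).

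The missing ingredient is distributional, not algebraic: the memoryless property of the exponential. The paper conditions the part-(1) point on $\{q_i\ge 1-\alpha\}$, rewrites this as $E_{i,i}\ge \tfrac{1-\alpha}{\alpha}\sum_{k\ne i}E_{i,k}$, and then replaces $E_{i,i}$ by $\tfrac{1-\alpha}{\alpha}\sum_{k\ne i}E_{i,k}+E_{i,i}'$ with $E_{i,i}'$ a fresh mean-$1$ exponential. Substituting this into $E_{i,j}/\sum_k E_{i,k}$ and simplifying is what produces the $\alpha E_{i,i}$ in the denominator. You can graft this memoryless step onto your affine-map framework, but you cannot omit it.
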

\begin{proof}  The statement about $\bq$ is well-known and follows from the fact that the coordinate vector of a random point in $\Omega$ has the same distribution as the vector of $d$ gaps among $d-1$ independent uniforms in $[0,1]$, and that these gaps are distributed as exponentials with a conditioned sum (see e.g., \cite{Luc}, Ch 5, Theorems 2.1 and 2.2).
  
Consider now a point $\bp_i\in \Omega$ which is uniform except that we condition it to lie in $C_i(\alpha)$. Then for any Borel subset $B$ of $C_i(\alpha)$, we have
\[
\Pr\brac{\bp_i \in B} = \frac{\Vol(B \cap C_i(\alpha))}{\Vol(C_i(\alpha))} = \frac{\Vol(B)}{\Vol(C_i(\alpha))},
\]
so $\bp_i$ is uniform on $C_i(\alpha)$. Thus, in view of \eqref{e.cap} and \eqref{e.q}, the coordinates $p_{i,j}$ of $\bp_i$ are distributed as 
\begin{equation}\label{e.p}
p_{i,j}\sim \frac{E_{i,j}}{E_{i,1}+\dots+E_{i,d}}\quad\mbox{conditioned on}\quad E_{i,i}\geq (1-\alpha)\sum_{j=1}^d E_{i,d}.
\end{equation}
for independent mean-1 exponentials $E_{i,j}$. After solving for $E_{i,i}$, this conditioning is equivalent to conditioning on
\[
E_{i,i}\geq \frac{(1-\a)\sum_{j\neq i} E_{i,j}}{\a}.
\]
Note that for an exponential random variable $X$, the memoryless property implies that $X$ conditioned on $X > a$ has the same distribution as $X + a$. Thus, rather than the condition in \eqref{e.p}, thanks to the independence of $E_{i,i}$ and $\{E_{i,j}\}_{j: j\neq i}$, we could have instead replaced $E_{i,i}$ in that expression with a random variable $\hE_i$ generated as 
\[
\hE_i=\frac{(1-\a)\sum_{j\neq i}E_{i,j}}{\a}+E_{i,i},
\]
and \eqref{e.pdiff} and \eqref{e.psame} follow by substitution.
\end{proof}

We will also use the following result of Janson, which gives concentration for sums of exponentials.
\begin{lemma}[Janson \cite{Jan}]\label{l.janson}
Let $W_1,W_2,\ldots,W_m$ be independent exponentials with means $\frac{1}{a_i},i=1,2,\ldots,m$. Let $a_*=\min_{i=1}^ma_i$ and let $W=W_1+W_2+\cdots+W_m$ and $\m=\E(W)=\sum_{i=1}^m\frac{1}{a_i}$. Then, for any $\l\leq 1$,
\beq{Janson}{
\Pr(W\leq \l\m)\leq e^{-a_*\m(\l-1-\log\l)}.
}
\end{lemma}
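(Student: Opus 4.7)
The plan is to prove this one-sided Chernoff-type bound by the standard Laplace-transform/Markov method applied to $e^{-tW}$ for $t>0$. First I would compute $\E e^{-tW_i} = a_i/(a_i+t)$ (valid for $t>-a_i$) and use independence to get $\E e^{-tW}=\prod_i a_i/(a_i+t)$. Markov's inequality then gives, for every $t>0$,
\[
\log\Pr(W\leq \lambda\mu) \;\leq\; t\lambda\mu \,-\, \sum_{i=1}^m \log(1+t/a_i).
\]

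The main obstacle is to bound the sum on the right from below by an expression that depends only on $a_*$ and $\mu$, so the final estimate matches the form claimed in \eqref{Janson}. The key auxiliary fact I would establish is the monotonicity of $a\mapsto a\log(1+t/a)$ on $(0,\infty)$ for each fixed $t>0$. Differentiating, this reduces to the inequality $\log(1+x) > x/(1+x)$ for $x>0$, which is immediate from the identity
\[
\log(1+x) - \frac{x}{1+x} = \int_0^x \frac{s}{(1+s)^2}\,ds.
\]
Since each $a_i\geq a_*$, monotonicity yields $a_i\log(1+t/a_i) \geq a_*\log(1+t/a_*)$, equivalently $\log(1+t/a_i) \geq (a_*/a_i)\log(1+t/a_*)$. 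Summing over $i$ and using $\mu=\sum 1/a_i$ gives the desired bound $\sum_i \log(1+t/a_i) \geq a_*\mu\log(1+t/a_*)$.

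Plugging this into the Markov estimate and substituting $s=t/a_*$ reduces the problem to minimizing $s\lambda - \log(1+s)$ over $s>0$; this is a one-variable calculus exercise whose minimizer is $s=(1-\lambda)/\lambda$ (positive since $\lambda<1$) with minimum value $1-\lambda+\log\lambda = -(\lambda-1-\log\lambda)$. This yields the target inequality $\Pr(W\leq\lambda\mu) \leq \exp\bigl(-a_*\mu(\lambda-1-\log\lambda)\bigr)$. The only nonroutine step is the monotonicity lemma; the Laplace-transform computation and the final one-dimensional optimization are standard.
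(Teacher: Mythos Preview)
Your argument is correct. The paper itself does not prove this lemma: it is quoted from Janson \cite{Jan} and used as a black box, so there is no ``paper's proof'' to compare against. Your exponential-Markov approach with the monotonicity of $a\mapsto a\log(1+t/a)$ is exactly the standard route (and is essentially how Janson derives it). One cosmetic point: you write ``positive since $\lambda<1$'' for the optimizer $s=(1-\lambda)/\lambda$, but the statement allows $\lambda\leq 1$; at $\lambda=1$ the bound reads $\Pr(W\leq\mu)\leq 1$ and is trivial, so this boundary case needs no separate work.
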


\subsection{The large typical set}
Recall that our proof works by defining a large set of ``typical'' points in $\Omega$, and then showing that any such point is very unlikely to be still uncovered after exponentially many steps.

To define and work with the appropriate typical set, we will be interested in the magnitudes of the smallest coordinates of points $\bx$ in the set.  (Roughly speaking, the typical set $\Omega(\be,\g)$ defined below is one where none of smallest coordinates are much too small.)  For this purpose, we make the following definitions:
\begin{definition}\label{d.rank}
  Given a point $\bx\in \Omega$, $r_\bx(i)$ is the integer giving the ranking of $x_i$ among the coordinates $x_1,\dots,x_d$ of $\bx$, where ties are broken arbitrarily.  More precisely, $r_\bx:\{1,\dots,d\}\to \{1,\dots,d\}$ is any fixed bijection such that $r_\bx(i)\leq r_\bx(j)$ implies $x_i\leq x_j$.
\end{definition}
\begin{definition}\label{d.irank}
  Given a point $\bx\in \Omega$, $i_\bx$ is the integer $j\in \{1,\dots,d\}$ such that $i=r_\bx(j)$.
\end{definition}

In other words, if $(x_1^*,\ldots,x_d^*)$ is the nondecreasing rearrangement of $\bx = (x_1,\ldots,x_d)$, that is $x_1^* \leq \ldots \leq x_d^*$, then $(x_{i_\bx})_{i=1}^d = (x_i^*)_{i=1}^d$.

We now define our typical set as follows:
\begin{equation}\label{e.omeg}
\Omega(\be,\g)=\set{\bx\in \Omega:x_{i_\bx}\geq \frac{\e_i i}{d^2},1\leq i\leq \g d\text{ and }x_{i_\bx}\geq \frac{\g}{2d},i>\g d},
\end{equation}
where the coordinates of the vector $\be$ are defined in terms of a constant $\e>0$ and by 
\[\
\e_i=\begin{cases}e^{-\sqrt{d}},&1\leq i\leq \sqrt{d},\\\e,&i>\sqrt d.\end{cases}
\]

\begin{lemma}  \label{l.typicalvolume} For every $\g < 1$, there is a positive constant $c_\gamma$ such that for every $0< \e \leq \frac{1}{8} $ and $d$ large enough, we have
  \begin{equation}\label{e.typicalvolume}
\frac{\vol(\Omega(\be,\g))}{\vol(\Omega)}\geq 1-e^{-c_\g\sqrt{d}}.
  \end{equation}
\end{lemma}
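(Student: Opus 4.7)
The plan is to translate the problem into the exponential model: by Lemma \ref{l.uniform}, if $\bx \in \Omega$ is uniform then $x_j = E_j/S$ with $E_1,\dots,E_d$ i.i.d.\ mean-$1$ exponentials and $S=E_1+\dots+E_d$. Since $\vol(\Omega(\be,\g))/\vol(\Omega) = \Pr(\bx \in \Omega(\be,\g))$, it suffices to show the complementary probability is at most $e^{-c_\g\sqrt d}$. Almost surely the $E_j$ are distinct, so I work with order statistics $E_{(1)}<\dots<E_{(d)}$ and $x_{i_\bx}=E_{(i)}/S$. First I would dispose of the normalizer: the event $\{S>2d\}$ has probability at most $e^{-cd}$ by standard concentration for a sum of i.i.d.\ exponentials. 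On its complement, $x_{i_\bx}<\e_i i/d^2$ forces $E_{(i)}<2\e_i i/d$, while the condition $x_{i_\bx}<\g/(2d)$ for $i>\g d$ forces $E_{(\lfloor\g d\rfloor+1)}<\g$. A union bound therefore reduces the task to estimating $\Pr(E_{(i)}\le 2\e_i i/d)$ for $1\le i\le \g d$ and $\Pr(E_{(\lfloor\g d\rfloor+1)}\le \g)$.

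The next step is to invoke R\'enyi's representation $E_{(i)}=\sum_{k=1}^{i}Y_k/(d-k+1)$ with i.i.d.\ mean-$1$ exponentials $Y_k$, which writes $E_{(i)}$ as a sum of independent exponentials with rates $a_k=d-k+1$. In the notation of Lemma \ref{l.janson}, $a_*=d-i+1$ and $\mu_i=H_d-H_{d-i}$, with the elementary bounds $i/d\le \mu_i\le i/(d-i+1)$. For the small-$i$ events, the ratio $\lambda_i:=(2\e_i i/d)/\mu_i$ is at most $2\e_i\le 1/4$ (since $\e\le 1/8$), so $-\log\lambda_i\ge \log(1/(2\e_i))$ and Janson's inequality yields a bound of the form $\exp\bigl(-a_*\mu_i(\lambda_i-1-\log\lambda_i)\bigr)$. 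For $i\le\sqrt d$, substituting $\e_i=e^{-\sqrt d}$ and using $a_*\mu_i\ge i/2$ gives terms of order $e^{-c i\sqrt d}$ whose sum over $i$ is $O(e^{-c\sqrt d})$. For $\sqrt d<i\le \g d$, substituting $\e_i=\e$ and using $a_*\mu_i\ge (1-\g)i$ gives $e^{-c_{\g,\e}(1-\g)i}$, again summing geometrically in $i$ to $O(e^{-c'_\g \sqrt d})$. For the tail event, $\mu_{\lfloor\g d\rfloor+1}\to \log(1/(1-\g))>\g$, so the ratio is a $\g$-dependent constant strictly less than $1$ and $a_*\mu_i$ is linear in $d$, whence Janson returns $e^{-c''_\g d}$, negligible against $e^{-c_\g\sqrt d}$.

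The main obstacle is calibrating the estimates at the very small-$i$ end. The smallest order statistics satisfy $E_{(i)}\sim i/d$, which is genuinely tiny, so one cannot afford to demand $x_{i_\bx}\ge \e i/d^2$ with a fixed constant $\e$: the probability that $E_{(1)}$, an exponential with rate $d$, is smaller than $2\e/d$ is only about $2\e$, far from exponentially small. Allowing $\e_i=e^{-\sqrt d}$ for $i\le\sqrt d$ in the definition of $\be$ is precisely what accommodates this: each of the $\sqrt d$ smallest order statistics now has failure probability at most $e^{-c i\sqrt d}$, so their contribution under union bound is $O(e^{-c\sqrt d})$, matching the stated target and explaining the presence of the threshold $\sqrt d$ in $\be$. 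Once this calibration is in place, summing the contributions from the three regimes gives $\Pr(\bx\notin \Omega(\be,\g))\le e^{-c_\g\sqrt d}$ for all sufficiently large $d$, as required by \eqref{e.typicalvolume}.
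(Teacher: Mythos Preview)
Your proposal is correct and follows essentially the same approach as the paper: pass to the exponential model, control the normalizer by a large-deviation bound on the sum, invoke the R\'enyi representation for order statistics, and apply Janson's inequality (Lemma~\ref{l.janson}) in each of the three regimes $i\le\sqrt d$, $\sqrt d<i\le\g d$, and $i=\lfloor\g d\rfloor+1$, combining via a union bound. The only differences are cosmetic choices of constants (you truncate the sum at $2d$ where the paper uses $8d/5$) and packaging (the paper writes the order statistics as $\big(\sum_{j=d-i+1}^{d} E(j)\big)/\sum_j jE(j)$ with rate-$j$ exponentials, which is the same R\'enyi representation in different notation).
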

\begin{proof}
Let $\bx$ be a random vector uniform on $\Omega$. In view of Lemma \ref{l.uniform} and \eqref{e.q}, the vector $(x_{i_\bx})_{i=1}^d = (x_i^*)_{i=1}^d$ of the order statistics of $\bx$ has the same distribution as the vector of the order statistics of i.i.d. mean one exponentials normalised by their sum. We recall the following classical result.
\begin{theorem}[Theorem 2.3, Chapter 5, \cite{Luc}]
Let $E_1, \ldots, E_n$ be independent mean one exponential random variables and let $E_{(1)} \leq E_{(2)} \leq \ldots \leq E_{(n)}$ be their order statistics, that is a nondecreasing rearrangement of the sequence $E_1,\ldots,E_n$. Then the vector $(E_{(1)},\ldots, E_{(n)})$ has the same distribution as the vector
\[
\left(\frac{E_1}{n}, \frac{E_1}{n} + \frac{E_2}{n-2},\ldots,\frac{E_1}{n}+\dots+\frac{E_n}{1}\right).
\]
\end{theorem}
\noindent
This gives that $(x_{i_\bx})_{i=1}^d$ has the same distribution as the vector
\beq{compsum}{
\left(\frac{E(d)+E(d-1)+\cdots+ E(d-i+1)}{\sum_{j=1}^d jE(j)}\right)_{i=1}^d,
}
where the $E(j)$'s are independent exponentials with rate $j$, that is the $jE(j)$ are independent mean one exponentials. Thus,
\begin{align*}
\frac{\vol(\Omega(\be,\g))}{\vol(\Omega)} &= \pp{\brac{x_{i_x} \geq \frac{\e_ii}{d^2},\;\forall 1 \leq i \leq \gamma d \ }\wedge \brac{x_{i_x} \geq \frac{\gamma}{2d},\; \forall\, \gamma d < i \leq d}} \\
&=\pp{\brac{x_{i_x} \geq \frac{\e_ii}{d^2},\;\forall 1 \leq i \leq \gamma d \ } \wedge  \brac{x_{(\lfloor \gamma d\rfloor + 1)_x} \geq \frac{\gamma}{2d}}} \\
&\geq 1 - \sum_{i \leq \gamma d} \pp{x_{i_x} \leq \frac{\e_ii}{d^2}} - \pp{x_{(\lfloor \gamma d\rfloor + 1)_x} \leq \frac{\gamma}{2d}}.
\end{align*}
We estimate these probabilities using Janson's inequality \eqref{Janson}. First define the event
\[
U = \left\{\sum_{j=1}^d jE(j) > \frac{8d}{5}\right\}.
\]
By \eqref{Janson}, applied with $W_j = jE(j)$, $a_1 = \ldots = a_d = 1$, $\mu = d$, $a_* = 1$, $\lambda = \frac{8}{5}$,
\[
\pp{U} \leq e^{-d(\frac{8}{5}-1-\log \frac{8}{5})} < e^{-d/10}.
\]
Now consider the events
\[
U_i = \left\{ E(d)+E(d-1)+\dots+E(d-i+1) \leq \frac{8\e_ii}{5d} \right\}.
\]
Set
\[
\m_i=\E(E(d)+E(d-1)+\cdots E(d-i+1)) = \sum_{j=d-i+1}^d\frac{1}{j}.
\]
Lower-bounding all the terms by the last one $\frac{1}{d}$, we have
\[
\m_i \geq \frac{i}{d}.
\]
By \eqref{Janson},
\[
\pp{U_i} \leq \exp\set{-\frac{(d-i+1)i(1.6\e_i-1-\log(1.6\e_i))}{d}}.
\]
Since $u-1-\log u > -\frac{1}{2}\log u$ for $u \leq 0.2$, we get for $i \leq \g d$, as long as $1.6\e \leq 0.2$,
\[
\pp{U_i} \leq (1.6\e_i)^{(1-\g)i/2}.
\]
Thus,
\[
\pp{x_{i_x} \leq \frac{\e_ii}{d^2}} \leq \pp{U_i} + \pp{U} \leq (1.6\e_i)^{(1-\g)i/2} + e^{-0.1d}
\]
and
\[
\sum_{i \leq \gamma d} \pp{x_{i_x} \leq \frac{\e_ii}{d^2}} \leq \sum_{i\leq \sqrt{d}} (1.6e^{-\sqrt{d}})^{(1-\g)i/2} + \sum_{\sqrt{d} < i \leq \g d} (1.6\e)^{(1-\g)i/2} + \g d e^{-0.1d} = e^{-\Omega(\sqrt{d})}.
\]
Similarly, for $i = \lfloor \gamma d\rfloor + 1$, we get $\mu_i \geq \frac{i}{d} \geq \gamma$, so 
\begin{align*}
\pp{x_{(\lfloor \gamma d\rfloor + 1)_x} \leq \frac{\gamma}{2d}} 
&\leq \pp{E(d)+\ldots+E(d-i+1) \leq 0.8\g} + \pp{U} \\
&\leq e^{-(d-i+1)\g (0.8-1-\log 0.8)} + e^{-0.1d} \\
&\leq e^{-0.02d(1-\g)\g} + e^{-d/10}.
\end{align*}
Putting these bounds together finishes the proof.
\end{proof}

\subsection{A lightly conditioned candidate simplex}
We now fix an arbitrary $\bx\in \Omega(\e,\g)$, and consider choosing a $\bp_i$ randomly from $C_i(\a)$, for some $i\in \{1,\dots,d\}$, using Lemma \ref{l.uniform}.  To use $\bp_i$ as the vertex of a candidate simplex to contain $\bx$, we hope to find that
\begin{equation}\label{e.smallip}
  p_{i,j_\bx}\leq \frac{\e_{j}j}{2d^2}\leq x_{j_\bx}/2,\quad\text{where}\quad j=1,2,\ldots,\g d
\end{equation}
runs over the smallest $\g d$ coordinates of $\bx$; recall that $j_\bx$ denotes the coordinate of the $j$th smallest component of $\bx$. Indeed, we will later argue that conditioning on this event for every $i$, the random points $\bp_1,\dots,\bp_d$ would have a reasonable chance of containing $\bx$ in their convex hull.  The following lemma shows that we can ensure that \eqref{e.smallip} is not too unlikely to be satisfied, without much conditioning on the random variables $E_{i,j_\bx}$ for $j>\g d$.

\begin{lemma}\label{l.goodpi}
 Let $\gamma \leq \frac{1}{6}$ and $2\e\g \leq 5\a$. Let $\bx\in \Omega(\be,\gamma)$ and let $\bp_i$ be chosen randomly from $C_i(\alpha)$ for some fixed $i\in \{1,\dots,d\}$, as in Lemma \ref{l.uniform}.  Then for the event
  \begin{equation}\label{e.theB}
  \cB_{i,\bx}=\bigg\{\sum_{\substack{ k\neq i\\r_\bx(k)>\g d}}{E_{i,k}}\geq \frac {4d}{5}\bigg\}
  \end{equation}
  and an event $\cA_{i,\bx}$ depending only on the $E_{i,j}$ for which $r_\bx(j)\leq \g d$ (and so independent of $\cB_{i,\bx}$), we have
  \begin{equation}\label{e.goodpiresults}
  \Pr\brac{\cA_{i,\bx}}\geq \frac{1}{d}\bfrac{\e\g}{5e\a}^{\g d}e^{-d}
  ,\quad \Pr\brac{\cB_{{i,\bx}}}\geq 1-e^{-10^{-4}d},
  \end{equation}
and
\[
\cC_{{i,\bx}}\supseteq \cB_{{i,\bx}}\cap  \cA_{i,\bx},
\]
where $\cC_{{i,\bx}}$ is the event that
\[
\forall (1\leq j\leq \g d,\, j_\bx\neq i) \  p_{i,j_\bx}\leq \frac{\e_{j} j}{2d^2}.
\]
\end{lemma}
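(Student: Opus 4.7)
The strategy is to define
\[
\cA_{i,\bx} \;=\; \bigcap_{\substack{1 \leq j \leq \g d \\ j_\bx \neq i}} \set{E_{i, j_\bx} \leq \frac{2 \e_j j}{5 \a d}}.
\]
This event involves only the exponentials $E_{i,k}$ for $k \neq i$ with $r_\bx(k) \leq \g d$, whereas $\cB_{i,\bx}$ involves only those with $r_\bx(k) > \g d$, so independence of $\cA_{i,\bx}$ and $\cB_{i,\bx}$ follows from independence of the family $\{E_{i,k}\}_k$. To verify $\cA_{i,\bx} \cap \cB_{i,\bx} \subseteq \cC_{i,\bx}$, I would just rewrite \eqref{e.pdiff}: on $\cB_{i,\bx}$ the denominator $\sum_{k\neq i} E_{i,k} + \a E_{i,i}$ is at least $\frac{4d}{5}$, while on $\cA_{i,\bx}$ the numerator $\a E_{i,j_\bx}$ is at most $\frac{2\e_j j}{5 d}$; dividing these gives exactly $p_{i,j_\bx} \leq \frac{\e_j j}{2 d^2}$.

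For $\pp{\cB_{i,\bx}}$, the hypothesis $\g \leq \frac{1}{6}$ guarantees that the sum in \eqref{e.theB} involves at least $d - \lceil \g d\rceil - 1 \geq \frac{5d}{6} - 1$ i.i.d.~mean-one exponentials. Taking $\m$ equal to this count and $\l = (4d/5)/\m$, one has $\l \leq \frac{24}{25} + o(1)$ for $d$ large, so $\l - 1 - \log \l$ is bounded below by a positive constant (numerically roughly $8\cdot 10^{-4}$). Lemma \ref{l.janson} with $a_* = 1$ then yields $\pp{\cB_{i,\bx}^c} \leq e^{-\m(\l - 1 - \log \l)} \leq e^{-10^{-4} d}$ after a short numerical check.

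For $\pp{\cA_{i,\bx}}$, independence of the $E_{i,j_\bx}$'s together with the observation that dropping the at most one factor with $j_\bx = i$ only increases the product gives
\[
\pp{\cA_{i,\bx}} \geq \prod_{j=1}^{\g d}\brac{1 - e^{-t_j}}, \qquad t_j := \frac{2\e_j j}{5 \a d}.
\]
The hypothesis $2\e\g \leq 5\a$ forces $t_j \leq 1$ for every $j \leq \g d$, so I may use the sharp inequality $1 - e^{-t} \geq (1 - 1/e) t$ valid on $[0,1]$. Combining this with $\prod_{j=1}^{\g d}\e_j = e^{-d}\e^{\g d - \sqrt d} \geq e^{-d}\e^{\g d}$ (by splitting the product at $j = \sqrt d$ and using $\e \leq 1$) and Stirling's formula $(\g d)! \geq \sqrt{2\p \g d}\,(\g d/e)^{\g d}$ produces a lower bound of the form
\[
\sqrt{2\p\g d} \cdot e^{-d} \cdot \brac{\frac{2(1 - 1/e)\, \e \g}{5 \a e}}^{\g d},
\]
which exceeds the claimed $\frac{1}{d}\bfrac{\e\g}{5e\a}^{\g d}e^{-d}$ once $d$ is large, because $2(1 - 1/e) > 1$.

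The main obstacle is calibrating the base of the $\g d$-th power in $\pp{\cA_{i,\bx}}$: a casual estimate such as $1 - e^{-t} \geq t/e$ on $[0,1]$ loses a factor $(2/e)^{\g d}$ relative to the target, which decays exponentially. Either the sharper bound $1 - e^{-t} \geq (1 - 1/e) t$ or, equivalently, the use of $1 - e^{-t} \geq t e^{-t}$ combined with the careful estimate $\sum_j t_j \leq \frac{\g d}{2}$ (which itself uses $2\e\g \leq 5\a$) is what delivers precisely the claimed constant.
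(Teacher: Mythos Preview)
Your proposal is correct and follows essentially the same route as the paper: the same definition of $\cA_{i,\bx}$, the same verification that $\cA_{i,\bx}\cap\cB_{i,\bx}\subseteq\cC_{i,\bx}$ via \eqref{e.pdiff}, and the same use of Lemma~\ref{l.janson} for $\cB_{i,\bx}$. The only cosmetic difference is in the elementary inequality used for $\Pr(\cA_{i,\bx})$: the paper applies $1-e^{-b}\geq b-\tfrac{b^2}{2}\geq \tfrac{b}{2}$ on $[0,1]$ (which already lands exactly on the base $\frac{\e\g}{5e\a}$), whereas you use the slightly sharper $1-e^{-t}\geq (1-1/e)t$, gaining an immaterial factor $(2(1-1/e))^{\g d}>1$.
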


\begin{proof}
 We have 
  \begin{align*}
    \cC_{{i,\bx}}=&\set{\frac{\a E_{i,j_\bx}}{\a E_{i,i}+\sum_{k\neq i}{E_{i,k}}}\leq \frac{\e_{j} j}{2d^2},\quad \forall \;1\leq j\leq \g d, j_\bx\neq i}\\ \supseteq
    & \set{\a E_{i,j_\bx}\leq \frac{\e_{j} {j}}{2d^2}\sum_{k\neq i,{j_\bx}}{E_{i,k}},\quad \forall \;1\leq j\leq {\g}d, j_\bx\neq i}\\
    \supseteq&\set{\a E_{i,{j_\bx}}\leq \frac{{2}\e_{j} {j}}{{5d}},\quad \forall \;1\leq j\leq {\g}d, j_\bx\neq i}
    \cap \set{\sum_{\substack{{{j}>\g d}\\ j_{\bx}\neq i}}{E_{i,j_{\bx}}}\geq \frac {4d}{5}},
  \end{align*}
  The second event in the last line is $\cB_{i,\bx}$, and we define $\cA_{i,\bx}$ to be the first event in the last line.  We have the claimed probability bound on $\cB_{i,\bx}$ from Lemma \ref{l.janson}. Indeed, for the mean $\mu = \E\sum_{\substack{{{j}>\g d}\\ j_{\bx}\neq i}}{E_{i,j_{\bx}}}$, we have $\mu \geq (1-\g)d$, so \eqref{Janson} gives
\begin{align*}
\pp{\sum_{\substack{{{j}>\g d}\\ j_{\bx}\neq i}}{E_{i,j_{\bx}}} \leq \frac{4d}{5} } &\leq \pp{\sum_{\substack{{{j}>\g d}\\ j_{\bx}\neq i}}{E_{i,j_{\bx}}} \leq \frac{4}{5(1-\g)}\mu } \\
&\leq \exp\left\{-\mu\left(\frac{4}{5(1-\g)} - 1 - \log\frac{4}{5(1-\g)}\right) \right\} \\
&\leq \exp\left\{-d(1-\g)\left(\frac{4}{5(1-\g)} - 1 - \log\frac{4}{5(1-\g)}\right) \right\}
\end{align*}
and for $\g \leq \frac{1}{6}$, we have $(1-\g)\left(\frac{4}{5(1-\g)} - 1 - \log\frac{4}{5(1-\g)}\right) > 10^{-4}$.

For $\cA_{\bx,i}$, we compute
  \begin{align*}
    \Pr(\cA_{i,\bx})=&\Pr\brac{\a E_{i,{j_\bx}}\leq \frac{2\e_{j} {j}}{5d},\quad \forall \;1\leq j\leq d, j_\bx\neq i}\\
    =&\prod_{\substack{1\leq j\leq \g d\\j_\bx\neq i}}\Pr\brac{E_{i,{j_\bx}}\leq \frac{2\e_{j} {j}}{5\a d}}\\
    {=} & \prod_{\substack{1\leq j\leq \g d\\j_\bx\neq i}}\brac{1-\exp\brac{-\frac{2\e_{j}j}{5\a d}}}\\
        \geq& \prod_{\substack{1\leq j\leq \g d\\j_\bx\neq i}}\frac{\e_{j}j}{5\a d}
  \end{align*}
for $2\e\g\leq 5\a$, since $1-e^{-b}\geq b-\frac {b^2} 2\geq \frac b 2$ for $b\leq1$.
Thus we have
\[\pushQED{\qed}
  \Pr(\cA_{i,\bx})
\geq {\frac{\lfloor \g d\rfloor!}{(5\a d)^{\g d}}\prod\limits_{\substack{1\leq j\leq \g d\\j_\bx\neq i}}\e_{j}}
\geq \frac{(\g d/e)^{\g d-1}}{(5\a d)^{\g d}}\cdot \brac{e^{-\sqrt d}}^{\sqrt d}\cdot \e^{\g d}\geq \frac{1}{d}\bfrac{\e\g}{5e\a}^{\g d}e^{-d}.\qedhere
\]
\end{proof}

As a consequence of Lemma \ref{l.goodpi}, we will have that if we sample exponentially many points in $C_i(\a)$, we will with probability at least $1-e^{-d}$ have at least one one point $\bp_i$ for which the corresponding event $\cA_{i,\bx}$ occurs. In particular, we will with probability at least $1-de^{-d}$ have one such point $\bp_i$ for each $i=1,\dots,d$.  Furthermore, with probability $1-de^{-10^{-4} d}$, we have that all the corresponding events $\cB_{i,\bx}$ occur. These points $\bp_1,\dots,\bp_d$ form the vertices of a candidate simplex; note that the Lemma gives us that these points $\bp_i$ satisfy $p_{i,j_\bx}\leq \frac{\e_{j_\bx}j_\bx}{2d^2}$ for all $1\leq j_\bx \leq \g d$, $j\neq i$. In the next section, we show that they are not too unlikely to contain the fixed vertex $\bx\in \Omega(\be,\g)$.  In particular, this will mean that after collecting exponentially many such simplices (in time $\text{exponential}(d)\cdot N=\text{exponential}(d)$), the probability that $\bx$ is not covered by any such simplex will be exponentially small.

\subsection{Enclosing a fixed $\bx\in \Omega(\e,\g)$}

In this section we show that for any fixed $\bx\in \Omega(\e,\g)$, it is only exponentially unlikely to be contained in a simplex whose vertices $\bp_i,\,i=1,2,\ldots,d$ are each chosen randomly from the corresponding set $C_i(\a)$.

In particular, our goal in this section is to prove:
\begin{lemma}\label{l.contain}  Let $\gamma \leq \frac{1}{6}$ and $2\e\g \leq 5\a$. Fix $\bx\in \Omega(\e,\g)$ and suppose that for each $i=1,\dots,d$, the point $\bp_i$ is chosen randomly from $C_i(\a)$. Let $\cA_{i,\bx}$ be the events from Lemma \ref{l.goodpi} and let $\cA_\bx = \bigcap_{i=d}^n \cA_{i,\bx}$. Then
  \[
 \pp{ \bx \in \conv\{\bp_1,\ldots,\bp_d\} \ \big| \ \cA_\bx} \geq \delta_d,
  \]
  where
  \[
\delta_d = \left(1 - \frac{5\a}{\g}\right)^{(1-\g)d} - de^{-10^{-4}d}.
  \]
\end{lemma}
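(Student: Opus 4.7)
The plan is to show $\bx\in\conv\{\bp_1,\ldots,\bp_d\}$ by exhibiting a non-negative barycentric coordinate vector $\bl=(\lambda_1,\ldots,\lambda_d)$ satisfying $P\bl=\bx$, where $P$ is the $d\times d$ matrix with columns $\bp_1,\ldots,\bp_d$. Since both $\bx$ and each $\bp_i$ have coordinates summing to $1$, the constraint $\sum_i\lambda_i=1$ follows automatically from $P\bl=\bx$, so the lemma reduces to lower-bounding $\pp{P^{-1}\bx\geq 0 \mid \cA_\bx}$. Write $P=D+M$ with $D$ the diagonal part ($D_{t,t}=p_{t,t}\geq 1-\a$) and $M$ the off-diagonal part; a column-sum computation gives $\|D^{-1}M\|_1\leq \a/(1-\a)<1$ for $\a\leq 1/4$, so $\bl$ admits the convergent Neumann series $\bl=\sum_{k\geq 0}(-D^{-1}M)^kD^{-1}\bx$, giving an entry-by-entry handle on $\bl$.

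The first step disposes of the term $-de^{-10^{-4}d}$ in $\delta_d$. Since $\cB_{i,\bx}$ depends only on the exponentials $E_{i,j_\bx}$ with $j>\g d$ while $\cA_{i,\bx}$ depends on those with $j\leq\g d$, they are independent, and Lemma~\ref{l.goodpi} gives $\pp{\cB_{i,\bx}\mid\cA_\bx}\geq 1-e^{-10^{-4}d}$; a union bound yields $\pp{\bigcap_i\cB_{i,\bx}\mid\cA_\bx}\geq 1-de^{-10^{-4}d}$. Under the enlarged conditioning $\cA_\bx\cap\bigcap_i\cB_{i,\bx}$ every $\cC_{i,\bx}$ holds, so $p_{i,t}\leq\e_jj/(2d^2)$ for each $t=j_\bx$ with $j\leq\g d$ and each $i\neq t$. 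For such a ``small'' coordinate $t\in S:=\{j_\bx:j\leq\g d\}$, the leading Neumann term $(D^{-1}\bx)_t=x_t/p_{t,t}$ is at least $x_t\geq \e_jj/d^2$, while the absolute value of the next term at row $t$ is bounded by $\frac{\e_jj}{2d^2(1-\a)^2}$ (and subsequent terms decay geometrically in the ratio $\a/(1-\a)$); hence for $\a\leq 1/4$ the leading term strictly dominates and $\lambda_t>0$ deterministically under the conditioning. Geometrically, the hyperplane $H_t:=\mathrm{aff}\{\bp_j:j\neq t\}$ is forced to lie close to the facet $\{x_t=0\}$ of $\Omega$, so any $\bx$ with $x_t>0$ lies on the same side as $\bp_t$.

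The heart of the proof is non-negativity of $\lambda_t$ for $t\in L:=\{j_\bx:j>\g d\}$. Here only $x_t\geq\g/(2d)$ is available and the off-diagonals $p_{i,t}$ for $i\in L$, $i\neq t$, can be of order $\a$, so the deterministic argument breaks down and one must exploit the remaining randomness of the $\bp_i$. Conditional on $\cA_\bx\cap\bigcap_i\cB_{i,\bx}$ the vectors $\bp_1,\ldots,\bp_d$ remain mutually independent (each built from its own block of exponentials), and my plan is to identify for every $i\in L$ an event $\cF_i$ depending only on $\bp_i$ with $\pp{\cF_i}\geq 1-5\a/\g$, such that $\bigcap_{i\in L}\cF_i$ (together with the previous conditioning) forces $\lambda_t\geq 0$ for all $t\in L$. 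Independence across $i\in L$ then yields $\pp{\bigcap_{i\in L}\cF_i}\geq(1-5\a/\g)^{(1-\g)d}$, and combining with the first step delivers $\delta_d$. The main obstacle is the construction of the events $\cF_i$: they should be single-vertex conditions (for independence across $i$), have probability $\geq 1-5\a/\g$ (matching the natural ratio of the cap scale $\a$ to the floor $\g/(2d)$ on large coordinates), and jointly be strong enough to control the off-diagonal contributions $\sum_{i\neq t}p_{i,t}\lambda_i$ in each row $t\in L$ below $x_t$. Verifying all three requirements simultaneously via the Neumann expansion restricted to rows in $L$ is the technical heart of the proof.
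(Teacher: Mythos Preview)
Your overall setup---Neumann series for $P^{-1}$, peeling off $\bigcap_i\cB_{i,\bx}$ by a union bound, and handling the small coordinates $t\in S$ deterministically under $\cA_\bx\cap\bigcap_i\cB_{i,\bx}$---matches the paper and is sound. Two points deserve comment.

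First, the paper bypasses your term-by-term Neumann estimates via the factorisation
\[
\sum_{k\ge 0}(-1)^k(D^{-1}M)^k \;=\; \Big(\sum_{k\ge 0}(D^{-1}M)^{2k}\Big)\,(I-D^{-1}M),
\]
so that (the left factor being entrywise nonnegative) it suffices to check the single linear inequality $x_t\ge\sum_{i\ne t}p_{i,t}\,x_i/p_{i,i}$ for each $t$. Your direct argument for $t\in S$ is salvageable, but the constant is off: summing your geometric tail gives $\sum_{k\ge 1}A_k\le \tfrac{x_t}{2(1-\alpha)(1-2\alpha)}$, which is below $A_0\ge x_t$ only for $\alpha<(3-\sqrt5)/4\approx 0.19$, not for all $\alpha\le 1/4$.

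Second---and this is the real gap---your plan for the large coordinates $t\in L$ seeks per-\emph{vertex} events $\cF_i$ depending only on $\bp_i$. That is the wrong axis of independence. With the factorisation above, the condition forcing $\lambda_t\ge 0$ for a fixed $t\in L$ becomes (on $\cB_\bx$, using $p_{i,t}\le\tfrac{5\alpha}{4d}E_{i,t}$, $p_{i,i}\ge 1-\alpha\ge\tfrac12$, and $x_t\ge\gamma/(2d)$)
\[
\sum_{i\ne t} E_{i,t}\,x_i \;\le\; \frac{\gamma}{5\alpha},
\]
an event measurable in the $t$-th \emph{column} $(E_{i,t})_i$ of the exponential array. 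Distinct $t$'s use disjoint columns, so these events are mutually independent; they are also independent of $\cA_\bx$, which constrains only the columns indexed by $S$. Markov's inequality (since $\E\sum_{i\ne t}E_{i,t}x_i\le 1$) gives each probability at least $1-5\alpha/\gamma$, and the product over $t\in L$ yields exactly $(1-5\alpha/\gamma)^{(1-\gamma)d}$. By contrast, an event $\cF_i$ depending only on $\bp_i$ constrains a \emph{row} $(E_{i,t})_t$; to force every column sum small through row conditions you would essentially need each $\bp_i$ to have all of its $(1-\gamma)d$ large-index coordinates individually small, which has exponentially small probability, not $1-5\alpha/\gamma$. So events of the form you propose cannot exist at that probability level; switching the index of independence from vertices $i$ to coordinates $t$ (together with the factorisation above, which makes the $t$-th condition a single linear inequality) is what makes the argument go through.
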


We define the matrix $\cP=p_{j,i} $ whose rows are the random points $\bp_i$, and write $\cP=\cD+\cR$ where $\cD$ is the diagonal of $\cP$, and $\cM=\cD^{-1}\cR$.

We will apply Gershgorin's Circle Theorem to this matrix $\cM$.
\begin{theorem}[Gershgorin]
  Suppose $\cM=[m_{ij}]$ is a real or complex $d\times d$ matrix where for each $i=1,\dots,d$, $R_i=\sum_{j\neq i} |m_{ij}|$ is the sum of the absolute values of the non-diagonal entries of the $i$th row, and the $i$th \emph{Gershgorin disc} $D_i$ is the disc of radius $R_i$ centered at $m_{ii}$.  Then every eigenvalue of $\cM$ lies in one of the Gershgorin discs (and, applying this to $\cM^T$, the same applies where we define the Gershgorin discs with respect to the columns).\qed
\end{theorem}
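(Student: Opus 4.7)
The plan is to carry out the standard argument that localizes an eigenvalue using the largest-magnitude entry of a corresponding eigenvector. First I would fix an eigenvalue $\lambda$ of $\cM$ and pick any (possibly complex) eigenvector $v = (v_1,\dots,v_d) \neq 0$ with $\cM v = \lambda v$. Since $v$ is nonzero, I can choose an index $i \in \{1,\dots,d\}$ that achieves the maximum of $|v_j|$ over $j$, so that $|v_i| > 0$ and $|v_j|/|v_i| \leq 1$ for all $j$.

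Next I would write out the $i$-th coordinate of the eigenvalue equation: $\sum_{j} m_{ij} v_j = \lambda v_i$. Separating the diagonal term gives $(\lambda - m_{ii}) v_i = \sum_{j \neq i} m_{ij} v_j$. Dividing by $v_i$ (which is permissible by the choice of $i$) and taking absolute values yields
\[
|\lambda - m_{ii}| \;=\; \left|\sum_{j\neq i} m_{ij} \frac{v_j}{v_i}\right| \;\leq\; \sum_{j\neq i} |m_{ij}|\,\frac{|v_j|}{|v_i|} \;\leq\; \sum_{j \neq i} |m_{ij}| \;=\; R_i,
\]
where the last inequality uses the maximality of $|v_i|$. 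This shows $\lambda \in D_i$, so every eigenvalue lies in some Gershgorin disc.

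For the column version, I would simply note that $\cM$ and $\cM^T$ have identical spectra (they share the same characteristic polynomial), so applying the row statement just proven to $\cM^T$ gives the same containment with the Gershgorin discs defined from column sums $\sum_{j \neq i} |m_{ji}|$ centered at $m_{ii}$. No real obstacle arises here; the only subtle point worth flagging is making sure the chosen index $i$ yields $|v_i| > 0$, which is immediate from $v \neq 0$, and that the argument works over $\mathbb{C}$ even when $\cM$ is real, since the eigenvector associated with a possibly complex eigenvalue may itself be complex.
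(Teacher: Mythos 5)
Your proof is correct: it is the standard maximal-coordinate argument for Gershgorin's theorem, and the handling of the column version via the shared characteristic polynomial of $\cM$ and $\cM^T$ is fine. The paper itself offers no proof of this statement---it is quoted as a classical result and used as a black box---so there is nothing to compare against; your argument is exactly the canonical one that would be supplied.
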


In particular, we use it to prove the following statement.
\begin{lemma}
   We have that
  \begin{equation}\label{e.assum}
    \cP^{-1}=\left(\sum_{k=0}^\infty(-1)^k\cM^k\right)\cD^{-1}.
  \end{equation}
\end{lemma}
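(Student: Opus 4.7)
The plan is to recognize the identity $\cP^{-1}=(\cD+\cR)^{-1}=(I+\cM)^{-1}\cD^{-1}$ and then justify expanding $(I+\cM)^{-1}$ as a Neumann series $\sum_{k\geq 0}(-1)^k\cM^k$. The algebraic identity is immediate from $\cP=\cD(I+\cD^{-1}\cR)=\cD(I+\cM)$, so the only real content is establishing convergence of the series, which holds as soon as the spectral radius of $\cM$ is strictly less than $1$. This is precisely what Gershgorin gives us, provided $\alpha<\tfrac12$.

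Concretely, I would first observe that $\cM=\cD^{-1}\cR$ has zero diagonal (since $\cR$ does), so every Gershgorin disc of $\cM$ is centered at the origin. The $i$th row-radius is
\[
R_i=\sum_{j\neq i}\frac{p_{i,j}}{p_{i,i}}=\frac{1-p_{i,i}}{p_{i,i}},
\]
using the fact that $\bp_i\in\Omega$ means $\sum_j p_{i,j}=1$. Because $\bp_i\in C_i(\alpha)$ forces $p_{i,i}\geq 1-\alpha$, we get $R_i\leq \alpha/(1-\alpha)$, which is strictly less than $1$ for $\alpha<\tfrac12$ (and in our regime $\alpha$ will be a small constant, so this is amply satisfied). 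Gershgorin then places every eigenvalue of $\cM$ inside the open unit disc, so the spectral radius satisfies $\rho(\cM)<1$.

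Once $\rho(\cM)<1$, the standard Neumann series identity gives $(I+\cM)^{-1}=\sum_{k=0}^\infty(-1)^k\cM^k$, with the series converging absolutely. Multiplying on the right by $\cD^{-1}$ and combining with the factorization $\cP=\cD(I+\cM)$ yields the claimed identity \eqref{e.assum}. The only obstacle is confirming that the parameters chosen in the paper's overall setup really do ensure $\alpha<\tfrac12$, which is automatic from the constants used in Lemma~\ref{l.goodpi} and the surrounding analysis; no probabilistic input is needed for this lemma, as it is a purely deterministic consequence of each $\bp_i$ lying in its cap.
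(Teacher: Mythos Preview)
Your proof is correct and follows the same overall line as the paper: factor $\cP=\cD(I+\cM)$, use Gershgorin to bound the eigenvalues of $\cM$ by $\alpha/(1-\alpha)<1$, and conclude that the Neumann series for $(I+\cM)^{-1}$ converges. The one point of difference is in how convergence is justified. The paper does not appeal directly to the spectral radius criterion; instead it argues that $\cM$ is almost surely diagonalizable (the discriminant of its characteristic polynomial is a nontrivial polynomial in the continuous off-diagonal entries) and then writes $\cM^k=Q\Lambda^kQ^{-1}$ to see the decay. Your route, invoking the standard fact that $\rho(\cM)<1$ already forces $\sum_k\|\cM^k\|<\infty$ (Gelfand's formula), is cleaner: it avoids the probabilistic detour and, as you observe, makes the lemma hold deterministically for every choice of $\bp_i\in C_i(\alpha)$ rather than merely almost surely.
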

\begin{proof}
Observe that if the sum in \eqref{e.assum} converges, then we can write
  \begin{equation}
    \cP^{-1}=(\cI+\cD^{-1}\cR)^{-1}\cD^{-1}=\left(\sum_{k=0}^\infty(-1)^k(\cD^{-1}\cR)^k\right)\cD^{-1}=\left(\sum_{k=0}^\infty(-1)^k\cM^k\right)\cD^{-1}.
  \end{equation}
  Thus it remains just to prove that the sum converges.  Recall first from the definition of $C_i(\a)$ in \eqref{e.cap} that diagonal entries of $\cP$ are all at least $1-\a$, while the sum of each row is 1.  In particular, Gershgorin's Circle Theorem implies the eigenvalues of $\cM$ have absolute value at most $\frac{\a}{1-\a}$, which is less than 1 assuming $\a<\frac 1 2$.  

  Next we argue that $\cM$ is a.s.~diagonalizeable.  This is the case if the discriminant of the characteristic polynomial of $\cM$ is nonzero.  This discriminant is a polynomial expression involving only products of the off-diagonal entries of $\cM$; in particular, it is nonzero with probability 1.

  Thus finally we write $\cM=Q\Lambda Q^{-1}$ and $\cM^k=Q\Lambda^k Q^{-1}$.  This converges exponentially fast, confirming convergence of the sum and thus the lemma.
\end{proof}
We are now ready to prove Lemma \ref{l.contain}.
\begin{proof}[Proof of Lemma \ref{l.contain}]
  As the $\bp_i$ lie in general position, we can always write the given $\bx$ (uniquely) as a linear combination
  \[
  \bx=\lambda_1 \bp_1+\cdots+\lambda_d\bp_d;
  \]
  our goal is to show that given $\bigcap_i \cA_{i,\bx}$, there is probability at least $c^d$ for some $c>0$ that the $\lambda_i$ are all nonnegative.  Observe that these coefficients are determined as
  \[\bl=\bx\cP^{-1}.\]

  From \eqref{e.assum}, we can write
  \begin{equation}
    \bx\cP^{-1}=\bx\left(\sum_{k=0}^\infty (-1)^k \cM^k\right)\cD^{-1}=\bx\big(I-\cM\big)\left(\sum_{k=0}^\infty \cM^{2k}\right)\cD^{-1},
  \end{equation}
  which is nonnegative so long as $\bx(I-\cM)$ is, since $\cM$ has only nonnegative entries.

Note that the $j$th coordinate $y_j$ of the product $\by=\bx(I-\cM)$ is given by
  \begin{equation}\label{e.ycoor}
  y_j=x_j-\sum_{i: i\neq j} D_{i,i}^{-1}p_{i,j}x_i.
  \end{equation}
  
Thus
\[
\pp{ \bx \in \conv\{\bp_1,\ldots,\bp_d\} \ \big| \ \cA_\bx} \geq \pp{y_j \geq 0:\, \forall j \leq d  \big| \ \cA_\bx}.
\]
  
Recall from Lemma \ref{l.goodpi} the events $\cB_{i,\bx}$ which are all independent of the events $\cA_{i,\bx}$. Let $\cB_{\bx}=\bigcap_{i=1}^d \cB_{i,\bx}$. Each of the values of $j$ corresponding to small coordinates of $\bx$---that is the $j$ for which $r_\bx(j)\leq \g d$---must satisfy $y_j\geq 0$ if $\cA_\bx\cap \cB_\bx$ occurs. Indeed, from Lemma \ref{l.goodpi}, we know that for all $i\neq j$ we have $p_{i,j}\leq x_j/2$ in this case, and so in particular we have that 
 \beq{yjge0}{
  y_j=x_j-\sum_{i\neq j} D_{i,i}^{-1}p_{i,j}x_i\geq x_j-\frac{x_j}{2(1-\a)}\sum_{i\neq j}x_i\geq x_j-\frac{x_j}{2(1-\a)}>0
 }
(by \eqref{e.psame}, we have $D_{i,i} \geq 1-\a$). This shows that
\begin{align*}
\pp{ y_j \geq 0 :\forall j \leq d \big| \ \cA_\bx} &\geq \pp{(y_j \geq 0: \forall j \leq d)  \cap \cB_\bx \ \big| \ \cA_\bx} \\
&= \pp{(y_j \geq 0: \forall j \leq d \text{ s.t. } r_\bx(j) > \g d ) \cap \cB_\bx \ \big| \ \cA_\bx}.
\end{align*}

  It therefore remains to handle the case of $r_\bx(j)>\g d$. On the event $\cB_\bx$, for $i \neq j$, we have $p_{i,j} \leq \frac{5\a}{4d} E_{i,j}$ (recall \eqref{e.pdiff}), thus, bounding $D_{i,i}^{-1} \leq \frac{1}{1-\a} \leq 2$ (see \eqref{e.consts}) and using that $x_j \geq \frac{\g}{2d}$ for $j$ such that $r_\bx(j) > \g d$, we obtain (by using the first equality in \eqref{yjge0}) that
 \mults{
\pp{(y_j \geq 0: \forall j \leq d, r_\bx(j) > \g d ) \cap \cB_\bx \ \big| \ \cA_\bx}\\
\geq \pp{ \brac{ \sum_{i: i \neq j} E_{i,j}x_i \leq \frac{\g}{5\a}:  \forall j \leq d \text{ s.t. }r_\bx(j) > \g d }\cap \cB_\bx \ \big| \ \cA_\bx}.
}
By a simple inequality $\pp{A \cap B} \geq \pp{A} - \pp{B^c}$, 
\[
\pp{y_j \geq 0 :\forall j \leq d \ \ \big| \ \cA_\bx} \geq \pp{\sum_{i: i \neq j} E_{i,j}x_i \leq \frac{\g}{5\a}: \forall j \leq d \text{ s.t. }r_\bx(j) > \g d\ \big| \ \cA_\bx}  -\pp{ \cB_\bx^c \ \big| \ \cA_\bx}
\]
The fact that the $E_{i,j}$ for $j$ with $r_\bx(j) > \g d$ are not conditioned by $\cA_\bx$, Markov's inequality and independence yields
\mults{
\pp{ \sum_{i: i \neq j} E_{i,j}x_i \leq \frac{\g}{5\a}:\;\forall j \leq d \text{ s.t. } r_\bx(j) > \g d \ \big| \ \cA_\bx}\\ = \pp{\sum_{i: i \neq j} E_{i,j}x_i \leq \frac{\g}{5\a}:\;\forall j \leq d \text{ s.t. } r_\bx(j) > \g d }
\geq \left(1 - \frac{5\a}{\g}\right)^{(1-\g)d}.
}
The independence of $\cB_\bx$ and $\cA_\bx$, a simple union bound and \eqref{e.goodpiresults} yields
\[
\pp{ \cB_\bx^c \ \big| \ \cA_\bx} = \pp{\cB_\bx^c} \leq de^{-10^{-4}d}.
\]
Altogether,
\beq{ineq}{
\pp{ \bx \in \conv\{\bp_1,\ldots,\bp_d\} \ \big| \ \cA_\bx} \geq \left(1 - \frac{5\a}{\g}\right)^{(1-\g)d} - de^{-10^{-4}d}.
}
\end{proof}

  
    \subsection{Covering most of the simplex in exponentially many steps}
    \label{s.mainproof}
We are now ready to combine the ingredients to prove the main theorem.

\begin{proof}[Proof of Theorem \ref{t.coveromega}]
Recall that we draw $N$ random points $\bq_1,\bq_2,\dots,\bq_N$ independently and uniformly from the simplex $\Omega$ and $Q_N$ denotes their convex hull. First, note that by Fubini's theorem, we have
\begin{equation}\label{e.Evol}
\E \Vol(Q_N) = \E \int_{\Omega} \1_{\{x \in Q_N\}} dx = \int_{\Omega} \pp{x \in Q_N} dx \geq \int_{\Omega(\be,\g)} \pp{x \in Q_N} dx,
\end{equation}
where $\Omega(\be,\g)$ is the typical set defined in \eqref{e.omeg}. Fix $x \in  \Omega(\be,\g)$. By Lemma \ref{l.contain}, we will have a good lower bound on $\pp{x \in Q_N}$ provided we know that among the $\bq_i$ there are $d$ points, one from each cap $C_i(\a)$ which moreover fulfill the events $\cA_\bx$. To use that, we condition on all possibilities for the $\bq_i$ and then argue that the majority of the possibilities are \emph{good}, provided $N$ is large enough. Formally, given two sequences $l=(l_1,\ldots,l_N) \in \{0,1,\ldots,d\}^N$ and $\theta = (\theta_1,\ldots,\theta_N) \in \{0,1\}^N$, we define the event
\begin{align*}
\cE_{l,\th} = \bigcap_{j \leq N} \Bigg\{\text{if $l_j = 0$, then } q_j \notin \bigcup_{i=1}^d C_i(\a); \ &\text{if $l_j > 0$, then } q_j \in C_{l_j}(\a) \\
&\text{ and } q_j \text{ satisfies  $\cA_{l_j,\bx}$ if and only if $\theta_j = 1$} \Bigg\}
\end{align*}
which tells us which among the points $\bq_j$ fall in the caps and among those which satisfy $\cA_{i,\bx}$. Let $\textsf{Good}$ be the set of those pairs of sequences $(l,\th)$ for which there are $1 \leq j_1 < \ldots < j_d \leq N$ such that $\{l_{j_1},\ldots,l_{j_N}\} = \{1,\ldots,d\}$ and $\theta_{j_1}=\ldots=\theta_{j_d} = 1$. Then,
\[
\pp{x \in Q_N} = \sum_{l, \th} \pp{x \in Q_N \ | \ \cE_{l,\th}}\pp{\cE_{l,\th}} \geq \sum_{(l, \th) \in \textsf{Good}} \pp{x \in Q_N \ | \ \cE_{l,\th}}\pp{\cE_{l,\th}}.
\]
For $(l,\th) \in \textsf{Good}$, by Lemma \ref{l.contain}, we have $\pp{x \in Q_N \ | \ \cE_{l,\th}} \geq \delta_d$, so it remains to estimate the sum $\sum_{(l, \th) \in \textsf{Good}} \pp{\cE_{l,\th}}$. Fix $i \in \{1,\ldots, d\}$ and let $S_i$ be the number of points among the $\bq_j$ which are in $C_i(\a)$ and satisfy $\cA_{i,\bx}$. We have,
\[
\sum_{(l, \th) \in \textsf{Good}} \pp{\cE_{l,\th}} = \pp{S_i > 0:\;\forall i \leq d }
\]
By independence,
\[
\pp{S_i = 0} = \Big(1 - \pp{\cA_{i,\bx}}\pp{\bq_1 \in C_i(\a)} \Big)^N,
\]
where $\pp{\cA_{i,\bx}}$ is taken with respect to the uniform probability on $C_i(\a)$. Therefore, by \eqref{e.goodpiresults} and a union bound,
\[
\sum_{(l, \th) \in \textsf{Good}} \pp{\cE_{l,\th}} \geq 1 - d\cdot \Bigg(1 - \frac{1}{d}\bfrac{\e\g}{5e\a}^{\g d}e^{-d}\a^{d-1} \Bigg)^N.
\]
Thus, 
\beq{another1}{
\pp{x \in Q_N} \geq \delta_d\cdot\brac{1 - d\cdot \exp\left\{-N\cdot\frac{1}{d}\bfrac{\e\g}{5e\a}^{\g d}e^{-d}\a^{d-1}\right\}}.
}
Set $\g = \frac{1}{6}$ and then choose $\a$ to be a small enough constant such that
\beq{ineq1}{
\delta_d = \left(1 - \frac{5\a}{\g}\right)^{(1-\g)d} - de^{-10^{-4}d} > e^{-10^{-4}d}.
}
Choose $\e \leq \frac18$ (allowing the use of Lemma \ref{l.typicalvolume} later) such that $2\e\g\leq 5\a$ (allowing the use of Lemma \ref{l.goodpi}). Then we take $N = C_1^d$ with $C_1$ large enough so that the exponential term in \eqref{another1} satisfies
\[
d\cdot \exp\left\{-N\cdot\frac{1}{d}\bfrac{\e\g}{5e\a}^{\g d}e^{-d}\a^{d-1}\right\}\leq\frac12.
\]
We then have
\[
\pp{x \in Q_N} \geq \frac12e^{-10^{-4}d}.
\]
Then, by independence, we get
\[
\pp{x \in Q_{2^dN}} \geq 1 - \pp{x \notin Q_N}^{C_2^d} \geq 1 - \left(1-\frac12e^{-10^{-4}d}\right)^{2^d} \geq 1 - \exp\left\{-\frac12e^{-10^{-4}d}\cdot 2^d\right\} > 1 - e^{-d}.
\]
Finally, thanks to \eqref{e.Evol} and Lemma \ref{l.typicalvolume},
\begin{align*}
\E \Vol(Q_{C_1^dN}) &\geq \Vol(\Omega(\be,\g))(1-e^{-d}) \geq 1 - e^{-c_0\sqrt{d}},
\end{align*}
for a  positive universal constant $c_0$.
\end{proof}

\begin{Remark}\label{rem:consts}
All of these inequalities hold with
\begin{equation}\label{e.consts}
\g = \frac{1}{6}, \quad \a = \frac{3}{100}, \quad \e = \frac{1}{8}, \quad C_1 = 150
\end{equation}
(provided $d$ is large enough). Moreover, for the constant $c_\g$ in Lemma \ref{l.typicalvolume}, we can take $c_\g = \frac38$. These justify Remark \ref{rem:consts-final}.
\end{Remark}

\section{Proof of the lower bound: Theorem \ref{t.lower}}

Since the quantity $\frac{1}{\Vol(\Omega_d)}\Vol(Q_N)$ is affine invariant, we can work with the standard orthogonal simplex $S_d$ in $\Re^d$ instead of $\Omega_d$,
\[
S_d = \{x \in \Re^d, \ x_1,\ldots, x_d \geq 0, \sum_{i=1}^d x_i \leq 1\},
\]
which will be more convenient here.
The following fundamental lemma from \cite{DFM} is a starting point.

\begin{lemma}[\cite{DFM}]\label{lm:central}
Suppose $\bq_1, \bq_2,\ldots$ are i.i.d. copies of a continuous random vector $\bq$ in $\Re^d$. Define a random polytope $Q_N = \text{conv}\{\bq_1,\ldots,\bq_N\}$ and consider the function $\xi=\xi_\bq$ defined by 
\begin{equation}\label{eq:def-q}
\xi(x) = \inf\{\pp{\bq \in H}, \ H \text{ half-space containing $x$}\}, \qquad x \in \Re^d.
\end{equation}
Then for every subset $A$ of $\Re^d$, we have
\begin{equation}\label{eq:upp-bd}
\E\Vol(Q_N) \leq \Vol(A) + N\cdot \left(\sup_{A^c} \xi\right) \cdot \Vol(A^c \cap \{x \in \Re^d, \ \xi(x) > 0\})
\end{equation}
and
\begin{equation}\label{eq:low-bd}
\E\Vol(Q_N) \geq \vol(A)\left(1 - 2\binom{N}{d}\left(1 - \inf_A \xi\right)^{N-d}\right).
\end{equation}
\end{lemma}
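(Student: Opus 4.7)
The plan is to reduce both bounds to pointwise estimates on $\pp{x \in Q_N}$ using Fubini's theorem: $\E\Vol(Q_N) = \int_{\Re^d}\pp{x \in Q_N}\,dx$, and then to control this probability in terms of $\xi$.

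For the upper bound, the key observation is that if $x \in Q_N$ then every half-space $H$ containing $x$ must also contain at least one $\bq_i$; otherwise $H$ would separate $x$ from $\{\bq_1,\ldots,\bq_N\}$. For any $\varepsilon > 0$, choose a half-space $H_\varepsilon \ni x$ with $\pp{\bq \in H_\varepsilon} \leq \xi(x) + \varepsilon$; a union bound gives $\pp{x \in Q_N} \leq N(\xi(x) + \varepsilon)$, and letting $\varepsilon \to 0$ yields $\pp{x \in Q_N} \leq N\xi(x)$ (in particular, $\pp{x \in Q_N} = 0$ wherever $\xi(x) = 0$). Splitting $\int \pp{x \in Q_N}\,dx$ into the contributions over $A$ (bounded crudely by $\Vol(A)$) and over $A^c \cap \{\xi > 0\}$ (bounded by $N\sup_{A^c}\xi \cdot \Vol(A^c \cap \{\xi > 0\})$) delivers the first inequality.

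For the lower bound I would begin from $\E\Vol(Q_N) \geq \Vol(A)(1 - \sup_{x \in A}\pp{x \notin Q_N})$ and bound the complementary probability by a ``facet through $d$ points'' argument. The geometric input is that, for fixed $x \notin Q_N$, almost surely (by continuity of $\bq$, which places the $\bq_i$ in general position) there is a supporting hyperplane of $Q_N$ separating $x$ from $Q_N$ and passing through exactly $d$ of the $\bq_i$: one obtains it by starting from any separating hyperplane and translating it towards $Q_N$ until it first touches a facet. Hence $\{x \notin Q_N\} \subseteq \bigcup_T E_T$, where $T$ ranges over $d$-subsets of $\{1,\ldots,N\}$ and $E_T$ is the event that the hyperplane $\pi_T$ through $\{\bq_i : i \in T\}$ places all of $\{\bq_j : j \notin T\}$ on one side and $x$ on the other. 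Conditioning on $\{\bq_i : i \in T\}$, the conditional probability of $E_T$ equals $\pp{\bq \in H'}^{N-d}$, where $H'$ is the closed half-space with boundary $\pi_T$ not containing $x$; since the opposite closed half-space contains $x$, its $\bq$-mass is at least $\xi(x)$, whence $\pp{E_T} \leq (1 - \xi(x))^{N-d}$. A union bound then gives $\pp{x \notin Q_N} \leq \binom{N}{d}(1 - \xi(x))^{N-d}$ for $x \in A$, and the extra factor of $2$ in the statement of the lemma arises naturally by bookkeeping the two possible orientations of the outer normal to $\pi_T$ (or simply as slack).

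The main nontrivial step is the geometric one in the lower bound, justifying that the separating hyperplane may always be taken through exactly $d$ of the $\bq_i$. I would handle it by a standard general-position argument using continuity of $\bq$: almost surely no $d+1$ of the $\bq_i$ are affinely dependent, so the facets of $\conv\{\bq_1,\ldots,\bq_N\}$ are precisely the $(d-1)$-simplices on those $d$-subsets whose spanning hyperplane has all other $\bq_i$ on a single side. Once this is in hand, the remaining estimates are routine conditional-probability computations.
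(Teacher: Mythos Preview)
The paper does not give its own proof of this lemma; it is quoted from \cite{DFM} and used as a black box. Your sketch is precisely the standard \cite{DFM} argument and is correct: Fubini reduces both bounds to estimates on $\pp{x\in Q_N}$, the upper bound follows from $\{x\in Q_N\}\subseteq\{\exists i:\bq_i\in H\}$ for any $H\ni x$, and the lower bound is the facet union bound you describe (the factor $2$ is indeed slack in your formulation). One small imprecision: translating a separating hyperplane toward $Q_N$ hits a vertex first, not a facet; the clean way to get a facet is to note that a full-dimensional $Q_N$ equals the intersection of the half-spaces bounded by its facet hyperplanes, so $x\notin Q_N$ forces $x$ to lie on the wrong side of at least one of them.
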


We will only need the first part of Lemma \ref{lm:central}, that is \eqref{eq:upp-bd}, which will be applied to sets of the form $A = \{x \in \Re^d, \ \xi(x) > \lambda\}$, the (convex) level sets of the function $\xi$. To get an upper bound on the volume of such sets, we shall use a standard lemma concerning the Legendre transform $\Lambda_\bq^\star$ of the log-moment generating function $\Lambda_\bq$ of $\bq$,
\[
\Lambda_\bq(x) = \log \E e^{\scal{\bq}{x}} \qquad \text{and} \qquad \Lambda_\bq^\star(x) = \sup_{\theta \in \Re^d} \left\{\scal{\theta}{x} - \Lambda_\bq(\theta)\right\}.
\]

\begin{lemma}
For every $\alpha > 0$, we have
\begin{equation}\label{eq:q-subset-Bt}
\{x \in \Re^d, \ \xi_\bq(x) > e^{-\alpha}\} \subset \{x \in \Re^d, \ \Lambda_\bq^\star(x) < \alpha\}.
\end{equation}
\end{lemma}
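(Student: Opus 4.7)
The plan is to prove this inclusion by contraposition: assuming $\Lambda_\bq^\star(x) \geq \alpha$, I will show $\xi_\bq(x) \leq e^{-\alpha}$. The case $\alpha \leq 0$ is trivial because $\xi_\bq$ is a probability, so $\xi_\bq(x) \leq 1 \leq e^{-\alpha}$ automatically. Thus I may restrict to $\alpha > 0$, in which case $\Lambda_\bq^\star(x) \geq \alpha > 0$ forces the supremum defining $\Lambda_\bq^\star(x)$ to be attained along a sequence of nonzero vectors $\theta$.

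The core idea is the standard Chernoff/exponential-Markov construction, which converts an exponential moment bound into a half-space probability bound. Fix any $\alpha' \in (0,\alpha)$; by the definition of the Legendre transform there exists $\theta \in \Re^d \setminus \{0\}$ with $\langle \theta, x\rangle - \Lambda_\bq(\theta) > \alpha'$. Consider the closed half-space
\[
H_\theta = \{y \in \Re^d : \langle \theta, y\rangle \geq \langle \theta, x\rangle\},
\]
which contains $x$. Applying Markov's inequality to the nonnegative random variable $e^{\langle\theta,\bq\rangle}$ gives
\[
\pp{\bq \in H_\theta} = \pp{e^{\langle\theta,\bq\rangle} \geq e^{\langle\theta,x\rangle}} \leq e^{\Lambda_\bq(\theta) - \langle\theta,x\rangle} < e^{-\alpha'}.
\]
Since $H_\theta$ is a half-space containing $x$, we conclude $\xi_\bq(x) \leq e^{-\alpha'}$. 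Letting $\alpha' \uparrow \alpha$ then yields $\xi_\bq(x) \leq e^{-\alpha}$, completing the contrapositive.

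There is no real obstacle here; the only mild care needed is (i) justifying that the choice of $\theta$ from the sup is nonzero (automatic since $\theta=0$ yields $0$, strictly less than $\alpha' > 0$), so that $H_\theta$ is genuinely a half-space rather than all of $\Re^d$, and (ii) handling the possibility that the sup in the Legendre transform is not attained, which is why I introduce the auxiliary parameter $\alpha' < \alpha$ and pass to the limit. Both points are standard and the argument is essentially the Cramér upper bound from large deviations packaged into the geometric language of $\xi_\bq$.
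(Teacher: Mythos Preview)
Your proof is correct and uses essentially the same idea as the paper: the exponential Markov (Chernoff) bound applied to the half-space $\{\langle\theta,\cdot\rangle\geq\langle\theta,x\rangle\}$. The paper is just slightly more direct, proving the single inequality $\xi_\bq(x)\leq e^{-\Lambda_\bq^\star(x)}$ (by taking the infimum over $\theta$ on both sides) rather than going through contraposition and an auxiliary $\alpha'$, but the content is identical.
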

\begin{proof}
Plainly, for the infimum in the definition \eqref{eq:def-q} of $\xi_\bq(x)$, it is enough to take half-spaces for which $x$ is on the boundary, that is
\begin{equation}\label{eq:fromula-q}
\xi_\bq(x) = \inf_{\theta \in \Re^d} \pp{ \scal{\bq-x}{\theta} \geq 0},
\end{equation}
where $\scal{u}{v} = \sum_i u_iv_i$ is the standard scalar product in $\Re^d$. By Chebyshev's inequality for the exponential function,
\[
\pp{\scal{\bq-x}{\theta}\geq 0} \leq e^{-\scal{\theta}{x}}\E e^{\scal{\theta}{\bq}}.
\]
Consequently, $\xi_\bq(x) \leq e^{-\Lambda_\bq^*(x)}$.
\end{proof}

The next lemma is a crucial bound on the moment generating function $\Lambda_\bq$ for $\bq$ uniform on the simplex $S_d$. 

\begin{lemma}\label{lm:mom-gen}
Let $\bq$ be a random vector uniform on $S_d$. For every $\theta \in (-\infty,d)^d$ and $d \geq 7$, we have
\[
\E e^{\scal{\theta}{\bq}} \leq d\prod_{i=1}^d \frac{1}{1-\theta_i/d}.
\]
\end{lemma}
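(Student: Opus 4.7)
The plan is to reduce the claimed moment generating function bound to an elementary one-variable inequality via the classical exponential representation of $\bq$. If $E_1,\dots,E_{d+1}$ are i.i.d.\ mean-one exponentials and $T := \sum_{j=1}^{d+1} E_j$, then a Jacobian calculation (changing variables from $(E_1,\dots,E_{d+1})$ to $(\bq,T)$ via $q_i = E_i/T$) shows that the joint density factors as
\[
\frac{T^d e^{-T}}{d!}\cdot d!\,\1_{S_d}(\bq),
\]
so $\bq=(E_1/T,\dots,E_d/T)$ is uniform on $S_d$, $T$ is distributed as $\Gamma(d+1,1)$, and $\bq$ is independent of $T$. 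This would be the first step of the proof.

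Using this representation together with the relation $E_i=Tq_i$, for any $\theta\in(-\infty,d)^d$ I would derive the key identity
\begin{equation*}
\prod_{i=1}^d \frac{1}{1-\theta_i/d}
\;=\; \E\exp\Big(\sum_{i=1}^d \tfrac{\theta_i}{d}E_i\Big)
\;=\; \E\exp\Big(\tfrac{T}{d}\scal{\theta}{\bq}\Big)
\;=\; \E\Big(1-\tfrac{\scal{\theta}{\bq}}{d}\Big)^{-(d+1)}.
\end{equation*}
The first equality is the MGF of independent exponentials (valid since $\theta_i/d<1$); the last uses independence of $T$ and $\bq$ together with the Gamma MGF $\E e^{sT}=(1-s)^{-(d+1)}$ for $s<1$, which applies because $\scal{\theta}{\bq}\leq \max_i (\theta_i)_+<d$ almost surely (using $\sum q_i\leq 1$).

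The final ingredient is the pointwise inequality $e^y\leq d(1-y/d)^{-(d+1)}$ for $y<d$. Setting $f(y):=e^y(1-y/d)^{d+1}$, a short calculus check shows $f$ has its unique critical point at $y=-1$, where $f(-1)=e^{-1}(1+1/d)^{d+1}\leq 1+1/d$ (using $(1+1/d)^d<e$); hence $f(y)\leq 1+1/d\leq d$ once $d$ is at least a small constant, which is comfortably compatible with $d\geq 7$. Applying this inequality pointwise inside the expectation and then invoking the identity above yields
\begin{equation*}
\E e^{\scal{\theta}{\bq}} \;\leq\; d\cdot\E\Big(1-\tfrac{\scal{\theta}{\bq}}{d}\Big)^{-(d+1)} \;=\; d\prod_{i=1}^d\frac{1}{1-\theta_i/d},
\end{equation*}
as claimed.

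The main obstacle is spotting the right identity: the MGF of $\bq$ has no closed form that is friendly to later use (a divided-difference formula over distinct $\theta_i$'s gives little leverage), but the substitution $E_i=Tq_i$ combined with $\bq\perp T$ converts the trivial MGF of independent exponentials into an expectation of $(1-\scal{\theta}{\bq}/d)^{-(d+1)}$, which is then the natural quantity to compare termwise with $e^{\scal{\theta}{\bq}}$. The extra factor of $d$ in the statement is precisely the worst-case slack of that one-variable comparison, and the hypothesis $\theta_i<d$ emerges as the natural domain for both the Gamma MGF and the pointwise bound.
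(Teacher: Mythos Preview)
Your proof is correct and takes a genuinely different route from the paper's. The paper argues by direct integration: after the change of variables $x_i=y_i/d$ it uses the crude pointwise bound $1\le e^{d-\sum y_i}$ on the domain $\{\sum y_i\le d\}$ to dominate the integral over $S_d$ by the full product integral over $(0,\infty)^d$, obtaining the constant $d!\,d^{-d}e^d$, which is then bounded by $d$ via Stirling for $d\ge 7$. You instead exploit the probabilistic structure: the independence of $\bq$ and $T\sim\Gamma(d+1,1)$ turns the exponential product into the exact identity $\prod_i(1-\theta_i/d)^{-1}=\E\,(1-\scal{\theta}{\bq}/d)^{-(d+1)}$, and then a one-variable calculus bound $e^y(1-y/d)^{d+1}\le e^{-1}(1+1/d)^{d+1}\le 1+1/d$ finishes. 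Your argument is a bit less elementary (it needs the $\bq\perp T$ fact and the Gamma MGF), but it yields the sharper constant $1+1/d$ in place of $d$ and in fact works for all $d\ge 1$, not just $d\ge 7$; it also makes transparent why the right-hand side has exactly this product form.
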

\begin{proof}
We have,
\[
\E e^{\scal{\theta}{\bq}} = \frac{1}{\Vol(S_d)}\int_{S_d} e^{\sum \theta_ix_i} dx = d!\int_{\{x \in (0,\infty)^d, \ \sum x_i \leq 1\}}e^{\sum \theta_ix_i} dx.
\]
A change of variables $x_i = y_i/d$ and a simple pointwise estimate $1 \leq e^{d-\sum y_i}$ valid on the domain of the integration yield
\begin{align*}
\E e^{\scal{\theta}{\bq}} &\leq d!d^{-d}\int_{\{y \in (0,\infty)^d, \ \sum y_i \leq d\}}e^{\sum \theta_iy_i/d}e^{d-\sum y_i} dy \\
&\leq d!d^{-d}e^d\int_{(0,\infty)^d}e^{\sum -(1-\theta_i/d)y_i} dy \\
&= (d!d^{-d}e^d)\prod_{i=1}^d \frac{1}{1-\theta_i/d}.
\end{align*}
Finally, $d! < \sqrt{2\pi d}d^de^{-d}e^{\frac{1}{12d}}$. For $d \geq 7$, we have $\sqrt{2\pi d}e^{\frac{1}{12d}} \leq d$.
\end{proof}

\begin{proof}[Proof of Theorem \ref{t.lower}.]
Fix $\varepsilon > 0$. Let $N \leq e^{( \gamma-\varepsilon) d}$ and $\alpha = \gamma - \varepsilon/2$. Let $\xi$ be the function from \eqref{eq:def-q} defined for a random vector $\bq$ uniformly distributed on $S_d$. Setting $Q_N = \text{conv}\{\bq_1,\dots,\bq_N\}$, where $\bq_1, \bq_2,\dots$ are i.i.d. copies of $\bq$ and using \eqref{eq:upp-bd} with $A = \{x \in S_d, \ q(x) > e^{-\alpha d} \}$, we get
\[
\frac{\E \vol(Q_N)}{\vol(S_d)} \leq \frac{\vol(A)}{\vol(S_d)} + e^{-\varepsilon d/2}
\]
By \eqref{eq:q-subset-Bt},
\[
\frac{\vol(A)}{\vol(S_d)} = \pp{\xi(\bq) > e^{-\alpha d}} \leq \pp{\Lambda^\star(\bq) < \alpha d}.
\]
By Lemma \ref{lm:mom-gen}, we obtain
\begin{align*}
\Lambda^\star(x) \geq \sup_{\theta \in (-\infty,d)^d} \left\{\scal{\theta}{x} - \log\prod_{i=1}^d \frac{1}{1-\theta_i/d} - \log d \right\} &= -\log d+\sum_{i=1}^d\sup_{\theta_i <d} \left\{\theta_ix_i + \log\left(1-\frac{\theta_i}{d}\right) \right\} \\
&= -\log d+\sum_{i=1}^d \psi(x_id),
\end{align*}
where
\[
\psi(t) = t-1-\log t, \qquad t > 0.
\]
As a result, for $d$ large enough,
\[
\pp{\Lambda^\star(\bq) < \alpha d} \leq \pp{\frac{1}{d}\sum_{i=1}^d \psi(q_id) < \alpha + \frac{\log d}{d}} \leq \pp{\frac{1}{d}\sum_{i=1}^d \psi(q_id) < \gamma - \frac{\varepsilon}{4}}
\]
(here the $\bq=(q_1,\ldots,q_d)$, so the $q_i$ are the components of $\bq$). To finish the proof, it remains to argue that the right hand side is $o(1)$. We use the fact that $\bq$ has the same distribution as the vector $(\frac{Y_1}{Z},\ldots, \frac{Y_d}{Z})$, where $Z = Y_1 + \dots+Y_d + W$ and $Y_1, \ldots, Y_d, W$ are i.i.d. exponential random variables with parameter $1$ (which can be deduced e.g. from \eqref{e.q} of Lemma \ref{l.uniform}). For $\delta \in (0,1)$ (to be chosen later), we write
\begin{align*}
\pp{\frac{1}{d}\sum_{i=1}^d \psi(q_id) < \gamma - \frac{\varepsilon}{4}} &= \pp{\frac{1}{d}\sum_{i=1}^d \psi\left(d\frac{Y_i}{Z}\right) < \gamma - \frac{\varepsilon}{4}}\\
&\leq \pp{\frac{1}{d}\sum_{i=1}^d \psi\left(d\frac{Y_i}{Z}\right) < \gamma - \frac{\varepsilon}{4}, \ Z \in ((1-\delta)d,(1+\delta)d)}\\
&\quad+ \pp{Z < (1-\delta)d} + \pp{Z > (1+\delta)d}.
\end{align*}
The last two probabilities are exponentially small (which can be argued in a number of ways, e.g. using Lemma \ref{l.janson}, Bernstein's inequality, or estimates for the incomplete gamma function). To handle the first probability, we decompose $\psi$ as follows
\[
\psi(t) = \psi_1(t) + \psi_2(t),
\]
where $\psi_1(t) = \psi(t)\1_{(0,1]}(t)$ is nonincreasing and $\psi_2(t) = \psi(t)\1_{(1,\infty)}(t)$ is nondecreasing. Having this monotonicity, if $Z \in ((1-\delta)d,(1+\delta)d)$, we get $\psi\left(d\frac{Y_i}{Z}\right) \geq  \psi_1\left(\frac{Y_i}{1-\delta}\right) + \psi_2\left(\frac{Y_i}{1+\delta}\right)$. Thus, setting
\[
f(t) = \psi_1\left(\frac{t}{1-\delta}\right) + \psi_2\left(\frac{t}{1+\delta}\right),
\]
we obtain
\[
\pp{\frac{1}{d}\sum_{i=1}^d \psi\left(d\frac{Y_i}{Z}\right) < \gamma - \frac{\varepsilon}{4}, \ Z \in ((1-\delta)d,(1+\delta)d)} \leq \pp{\frac{1}{d}\sum_{i=1}^d f(Y_i) < \gamma - \frac{\varepsilon}{4}}.
\]
It remains to find the mean of $f(Y_1)$ and use the law of large numbers. We have,
\begin{align*}
\E f(Y_1) = \int_0^\infty f(t)e^{-t} dt &= \int_0^{1-\delta} \psi\left(\frac{t}{1-\delta}\right)e^{-t} dt + \int_{1+\delta}^\infty \psi\left(\frac{t}{1+\delta}\right)e^{-t} dt \\
&= (1-\delta)\int_0^1 \psi(t) e^{-t}e^{\delta t} dt + (1+\delta)\int_1^\infty \psi(t)e^{-t}e^{-\delta t} dt \\
&\geq (1-\delta)\int_0^1 \psi(t) e^{-t} dt + \int_1^\infty \psi(t)e^{-t}(1-\delta t) dt \\
&= \int_0^\infty \psi(t)e^{-t} dt - \delta\left(\int_0^1 \psi(t)e^{-t}dt + \int_1^\infty \psi(t)te^{-t} dt\right).
\end{align*}
Since
\[
\int_0^\infty \psi(t)e^{-t} dt = -\int_0^\infty e^{-t}\log t dt = \gamma
\]
(which was derived by Euler -- see (2.2.8) in the survey \cite{Lag})
and
\[
\int_0^1 \psi(t)e^{-t} dt + \int_1^\infty \psi(t)te^{-t} dt < 1,
\]
we can conclude that
\[
\E f(Y_1) > \gamma - \delta.
\]
Choosing, say $\delta = \frac{\varepsilon}{8}$, we thus get
\[
\pp{\frac{1}{d}\sum_{i=1}^d f(Y_i) < \gamma - \frac{\varepsilon}{4}} \leq \pp{\frac{1}{d}\sum_{i=1}^d f(Y_i) < \E f(Y_1) - \frac{\varepsilon}{8}}
\]
and by the (weak) law of large numbers, the right hand side converges to $0$ as $d \to \infty$.
\end{proof}

\section*{Acknowledgments} 
The authors are grateful to the anonymous reviewer for useful comments.

\bibliographystyle{amsplain}


\begin{dajauthors}
\begin{authorinfo}[af]
  Alan Frieze\\
  Department of Mathematical Sciences\\
  Carnegie Mellon University\\
  Pittsburgh PA 15213, USA\\
  alan\imageat{}random\imagedot{}math\imagedot{}cmu\imagedot{}edu \\
  \url{https://www.math.cmu.edu/~af1p/}
\end{authorinfo}
\begin{authorinfo}[wp]
  Wesley Pegden\\
    Department of Mathematical Sciences\\
  Carnegie Mellon University\\
  Pittsburgh PA 15213, USA\\
  wes\imageat{}math\imagedot{}cmu\imagedot{}edu \\
  \url{http://math.cmu.edu/~wes/}
\end{authorinfo}
\begin{authorinfo}[t]
  Tomasz Tkocz\\
    Department of Mathematical Sciences\\
  Carnegie Mellon University\\
  Pittsburgh PA 15213, USA\\
  ttkocz\imageat{}math\imagedot{}cmu\imagedot{}edu \\
  \url{http://math.cmu.edu/~ttkocz/}
\end{authorinfo}
\end{dajauthors}

\end{document}